\documentclass[12pt,a4paper]{article}
\usepackage{graphicx}
\usepackage[]{amsmath,amssymb,amsthm}
\usepackage{paralist}
\usepackage{epstopdf}

\newtheorem{theorem}{Theorem}[section]
\newtheorem{proposition}[theorem]{Proposition}
\newtheorem{lemma}[theorem]{Lemma}
\newtheorem{corollary}[theorem]{Corollary}
\theoremstyle{definition}
\newtheorem{definition}[theorem]{Definition}
\theoremstyle{remark}
\newtheorem{remark}[theorem]{Remark}

\newcommand{\R}{\mathbb{R}}
\newcommand{\N}{\mathbb{N}}
\newcommand{\Z}{\mathbb{Z}}
\newcommand{\Fix}{\textnormal{Fix}}
\newcommand{\inn}{\textnormal{in}}
\newcommand{\out}{\textnormal{out}}
\newcommand{\E}{{\mathcal E}}


\begin{document}

\begin{center}
{\large{\bf Switching in heteroclinic networks}}\\
\mbox{} \\
\begin{tabular}{cc}
{\bf Sofia B.\ S.\ D.\ Castro$^{\dagger}$} & {\bf Alexander Lohse$^{\ddagger,*}$} \\
{\small sdcastro@fep.up.pt} & {\small alexander.lohse@math.uni-hamburg.de}
\end{tabular}

\end{center}

\noindent $^{*}$ Corresponding author.

\noindent $^{\dagger}$ Faculdade de Economia and Centro de Matem\'atica, Universidade do Porto, Rua Dr.\ Roberto Frias, 4200-464 Porto, Portugal.

\noindent $^{\ddagger}$ Centro de Matem\'atica da Universidade do Porto, Rua do Campo Alegre 687, 4169-007 Porto, Portugal\footnote{Long-term address: Fachbereich Mathematik, Universit\"at Hamburg, Bundesstra{\ss}e 55, 20146 Hamburg, Germany}

\begin{abstract}
We study the dynamics near heteroclinic networks for which all eigenvalues of the linearization at the equilibria are real. A common connection and an assumption on the geometry of its incoming and outgoing directions exclude even the weakest forms of switching (i.e.\ along this connection). The form of the global transition maps, and thus the type of the heteroclinic cycle, plays a crucial role in this. We look at two examples in $\R^5$, the House and Bowtie networks, to illustrate complex dynamics that may occur when either of these conditions is broken. For the House network, there is switching along the common connection, while for the Bowtie network we find switching along a cycle.
\end{abstract}

\noindent {\em Keywords:} heteroclinic cycle, heteroclinic network, switching

\vspace{.3cm}

\noindent {\em AMS classification:} 34C37, 37C29, 37C80

\section{Introduction}
In dynamical systems a heteroclinic cycle is a set of finitely many equilibria and trajectories connecting them in a topological circle. A connected union of finitely many heteroclinic cycles is a heteroclinic network. These objects are associated with intermittent dynamics and used to model stop-and-go behaviour in various applications, including neuroscience, geophysics, game theory and populations dynamics. As examples we mention the work of Chossat and Krupa \cite{ChossatKrupa2014}, Rodrigues \cite{Rodrigues}, Aguiar \cite{Aguiar} and Hofbauer and Sigmund \cite{HofbauerSigmund}, respectively. A thourough theoretical understanding of their architecture, stability properties and dynamical effects is therefore of general interest. This study contributes to the latter by investigating a particular type of complicated dynamics in a neighbourhood of a network.

Complex behaviour near a heteroclinic network is often connected to the occurence of switching related to the network. There are different types of switching: at a node, along a connection, or infinite, leading to increasingly complex behaviour near the network. Switching at a node guarantees the existence of initial conditions, near an incoming connection to that node, whose trajectory follows any of the possible outgoing connections at the same node. This means the incoming path does not predetermine the outgoing choice at the node. Switching along a connection extends the notion of switching at a node to initial conditions whose trajectories follow a prescribed connection. Infinite switching ensures that any sequence of connections in the network is a possible path near the network.

The term switching has also been used to describe simpler dynamics where there is one change in the choices observed in trajectories. This is the case, for example, of Kirk and Silber \cite{KS}: The network is made of two cycles and trajectories are allowed to change from a neighbourhood of one cycle to a neighbourhood of the other cycle at most once. This change is referred to as switching, although this is a very mild instance of this phenomenon. In Castro et al.\ \cite{CLP2010} the expression railroad switching is used in relation to switching at a node.

Postlethwaite and Dawes \cite{PostlethwaiteDawes} find a form of complicated, though not infinite, switching leading to regular and irregular cycling near a network. There are several examples in the literature where the existence of infinite switching leads to chaotic behaviour near the network, see Aguiar et al.\ \cite{ACL2005}, and papers by Labouriau and Rodrigues \cite{LR2012, RL2014}. All the networks considered by these authors have at least one node at which the linearized vector field has complex eigenvalues.

Complex behaviour near a network can also arise from the presence of noise-induced switching, see Armbruster and coworkers \cite{ASK2003, StoneArmbruster} and Ashwin and Podvigina \cite{AP2010}. We do not address the presence of noise in this work.

To the best of our knowledge, there are no examples of vector fields, whose linearization at equilibria have no complex eigenvalues, supporting networks that exhibit infinite switching. On the contrary, Aguiar \cite{Aguiar} shows that if a heteroclinic network contains a subnetwork such as the one studied in \cite{KS}, then there is no switching along one of the connections of the network (along the common connection, for readers familiar with this network). The absence of switching along a connection precludes infinite switching and, therefore, chaotic behaviour near the network.

In this paper, we inquire into the possibility of various types of switching in networks made of simple cycles. We give sufficient conditions for the absence of switching along a connection, containing as a particular case the result in \cite{Aguiar}. We provide two examples illustrating the role of these sufficient conditions. Both examples exhibit some form of switching. For one of them we proceed to study some of the dynamic consequences of the presence of switching.

The paper is organized as follows: The next section consists of preliminary concepts and may be skipped by readers familiar with the subject. Section \ref{noswitch} contains the main results concerning switching, and Section \ref{section-bowtie} develops the dynamics for an example. Section \ref{conclusion} concludes.

\section{Preliminaries}\label{prelim}
We look at differential equations $\dot{x}=f(x)$, where $f$ is a vector field on $\R^n$, $n \geq 4$, which is $\Gamma$-equivariant for some finite group $\Gamma \subset O(n)$, that is,
$$
f(\gamma .x)=\gamma. f(x), \quad \forall \; \gamma \in \Gamma \; \; \forall \; x \in \R^n.
$$
A {\em heteroclinic cycle} is a set of finitely many equilibria $\xi_j$, $j=1, \hdots, m$, together with trajectories connecting them:
$$
[\xi_j \rightarrow \xi_{j+1}] \subset W^u(\xi_j) \cap W^s(\xi_{j+1}) \neq \emptyset \;\;\;\; (j=1,\hdots,m; \; \; \xi_{m+1}=\xi_1).
$$
Assuming that each connection $[\xi_j \rightarrow \xi_{j+1}]$ is of saddle-sink type in an invariant fixed-point subspace $P_j=\Fix(\Sigma_j)$, for some subgroup $\Sigma_j \subset \Gamma$, ensures robustness of the cycle to perturbations that respect the symmetry $\Gamma$.

In this paper we are interested mostly in heteroclinic networks consisting of \emph{simple} heteroclinic cycles. In fact, the properties of simple cycles are only required locally along a connection. There are various notions of simple cycles in the literature, see Krupa and Melbourne \cite{KrupaMelbourne2004} as well as Podvigina \cite{Podvigina2012, Podvigina2013}. The relevant properties for our results are
\begin{itemize}
 \item all $P_j$ are two-dimensional,
 \item the linearization $\mathrm{d}f(\xi_j)$ has only real and simple eigenvalues.
\end{itemize}
In particular, the cycles we address can be thought of as being generated by the simplex method used by Ashwin and Postlethwaite \cite{AP}, which we briefly describe in subsection \ref{simplex}. This guarantees that the equilibria $\xi_j$ are on the coordinate axes. We refer to networks constructed of such cycles as \emph{simple networks} and call the eigenvalues of the linearization $\mathrm{d}f(\xi_j)$ at the equilibria radial ($-r_j<0$), contracting ($-c_j<0$), expanding ($e_j>0$) and transverse ($t_j \gtrless 0$), see Krupa and Melbourne \cite[Section 2.3]{KrupaMelbourne95a}.

Simple cycles have been classified into different types. For our purposes it is sufficient to distinguish two, see \cite{Podvigina2012}. In $\R^4$, $n \geq 4$, a cycle is of 
\begin{itemize}
 \item \emph{type $A$} if the eigenspaces corresponding to $c_j, t_j, e_{j+1}$ and $t_{j+1}$ belong to the same $\Sigma_j$-isotypic component for all $j$, see \cite[page 1190 and Theorem 2.4(b)]{KrupaMelbourne2004};
 \item \emph{type $Z$} if $\Sigma_j$ decomposes $P_j^\perp$, the orthogonal complement of $P_j=\Fix(\Sigma_j)$, into one-dimensional $\Sigma_j$-isotypic components for all $j$, see \cite[Definition 8]{Podvigina2012}.
\end{itemize}
Note that the two types are mutually exclusive, because for an $A$ cycle the isotypic component containing $c_j, t_j, e_{j+1}$ and $t_{j+1}$ is at least two-dimensional. In $\R^4$, cycles of type $Z$ can be further divided into types $B$ and $C$, see \cite[Definition 3.2]{KrupaMelbourne2004}.

In order to establish a convenient set-up for our study we, as usual, define cross-sections to the flow at incoming and outgoing connections near an equilibrium. We use the notation $H_i^{\inn,j}$ to denote the cross-section at an incoming connection to $\xi_i$ from $\xi_j$. Similarly, $H_i^{\out,j}$ denotes the cross-section at an outgoing connection from $\xi_i$ to $\xi_j$. Local maps are defined using the linearized flow near each equilibrium $\xi_i$. We use the, by now, standard notation for local and global maps:
\begin{itemize}
	\item  $\phi_{jik}: \; H_i^{\inn,j} \rightarrow H_i^{\out,k}$ is the local map near $\xi_i$ describing the flow for points coming from $\xi_j$ and proceeding to $\xi_k$;
	\item  $\psi_{ij}: \; H_i^{\out,j} \rightarrow H_j^{\inn,i}$ is the global map along the connection $[\xi_i \rightarrow \xi_j]$. It maps radial and expanding directions near $\xi_i$ to contracting and radial directions near $\xi_j$. Depending on the symmetry of the system sometimes all $\psi_{ij}$ may be assumed to equal the identity. See \cite{KrupaMelbourne2004} for more on global maps.
\end{itemize}
For the purpose of defining these maps, it is often convenient to add a second index to the eigenvalues of $\mathrm{d}f(\xi_j)$. It corresponds to the other node in the respective connection, e.g.\ $-c_{jk}$ is the contracting eigenvalue at $\xi_j$ in the direction of $\xi_k$. In the context of networks, transverse eigenvalues to one cycle may be contracting or expanding with respect to another cycle, see Castro and Lohse \cite[Section 5]{CastroLohse2014} for an example.

\medbreak
We now recall several notions of switching from Aguiar et al.\ \cite{ACL2005}. A \emph{(finite) path} on a heteroclinic network $X$ is a (finite) sequence of connections in $X$ such that the end node of a connection coincides with the initial node of the following connection. A given path is \emph{followed} or \emph{shadowed}, if in every neighbourhood of $X$ there is a trajectory that follows the connections of the path in the prescribed order. We say that there is \emph{(finite) switching} near $X$ if every (finite) path is shadowed. A special case of finite switching, and a necessary condition for infinite switching, is \emph{switching along a trajectory} $[\xi_j \to \xi_k]$, where we only demand that every combination of incoming connections at $\xi_j$ and outgoing connections at $\xi_k$ is shadowed. The weakest form of switching, \emph{switching at a node}, occurs when points from any incoming trajectory can follow any outgoing trajectory at a node.

\subsection{A brief description of the simplex method}\label{simplex}

We refer the interested reader to the article of Ashwin and Postlethwaite \cite{AP}, which we follow here.

Given a directed graph with $n$ vertices, the {\em simplex method} provides a vector field for which the directed graph is realized as a heteroclinic network. In the noise free setting, a directed graph with $n$ vertices is realised for a vector field on $\R^n$ explicitly defined as
$$
\dot{x}_j = x_j\left(1-|x|^2+\sum_{i=1}^n a_{ij}x_i^2 \right),
$$
with suitably chosen coefficients $a_{ij}$. This vector field has $\Z_2^n$ symmetry with $\Z_2$ acting by reflection on each coordinate axis. Hence, each coordinate hyperplane is flow-invariant and the nodes of the heteroclinic network are placed one on each coordinate axis.

Proposition 1 of \cite{AP} proves that the simplex method realizes any directed graph which is one- and two-cycle free. This means that the graph neither has nodes which are the start and end point of the same edge (one-cycles), nor nodes $v_i$, $v_j$ ($i\neq j$) with edges $[v_i \rightarrow v_j]$ and $[v_j \rightarrow v_i]$ (two-cycles).

\section{Switching along common connections}\label{noswitch}
Consider a simple heteroclinic network in $\R^n, n \geq 4$, possibly (but not necessarily) generated by the simplex method in \cite{AP}, with all equilibria on the coordinate axes. We label them accordingly, i.e.\ $\xi_j$ lies on the $x_j$-axis.

In \cite[Theorem 1]{Aguiar} it is shown that for a heteroclinic network with a Kirk and Silber \cite{KS} subnetwork (see Figure \ref{kirksilber}) there is no switching along the common connection and therefore no infinite switching. The result is stated in the context of replicator dynamics\footnote{Replicator dynamics are described by equations of the form $\dot{x}_i=x_i(f_i(x)-\bar{f}(x))$, where $f_i$ represents the fitness of type/agent $i$ and $\bar{f}$ denotes the average fitness. In a game theoretical context $f_i(x)=\sum_j a_{ij}x_j$ where $a_{ij}$ represents, loosely speaking, the payoff obtained by a choice $i$ of strategies against a choice $j$. See Hofbauer and Sigmund \cite[chapter 7]{HofbauerSigmund} for a detailed explanation. Note that, {\em a priori}, there is no symmetry associated to these systems.} and the equations are naturally equipped with a $\Z_2^n$ symmetry, leading to the reasonable assumption that in the construction of return maps the global maps are the identity. Fixed point planes are generated by groups isomorphic to $\Z_2^{n-2}$ and the resulting cycles are of type $Z$. We generalize this result by proving the absence of switching along any connection for which the respective incoming and outgoing directions span the same space (in a sense that is made precise in Assumption 1 below).

\begin{figure}[!htb]
\centerline{\includegraphics[width=0.4\textwidth]{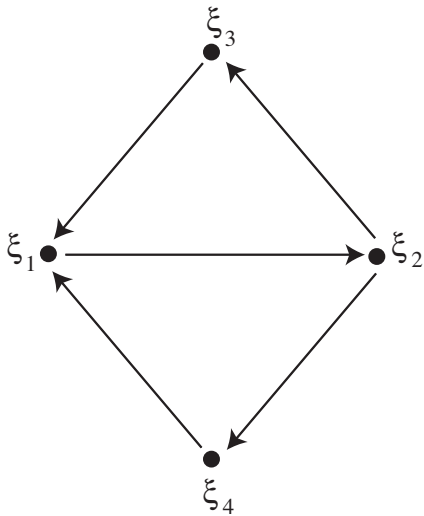}}
\caption{A Kirk and Silber network\label{kirksilber}}
\end{figure}

Suppose that in our heteroclinic network two cycles share a connection $[\xi_1 \to \xi_2]$. Let the first (second) cycle have an incoming direction to $\xi_1$ from $\xi_\alpha$ ($\xi_a$) and an outgoing from $\xi_2$ to $\xi_\beta$ ($\xi_b$), so that the following sequences of connections, shown in Figure \ref{commonconnection}, belong to the respective cycles:
\begin{align*}
[\xi_\alpha \to \xi_1 \to \xi_2 \to \xi_\beta] \quad \text{and} \quad
[\xi_a \to \xi_1 \to \xi_2 \to \xi_b]
\end{align*}
\begin{figure}[!htb]
\centerline{\includegraphics[width=0.6\textwidth]{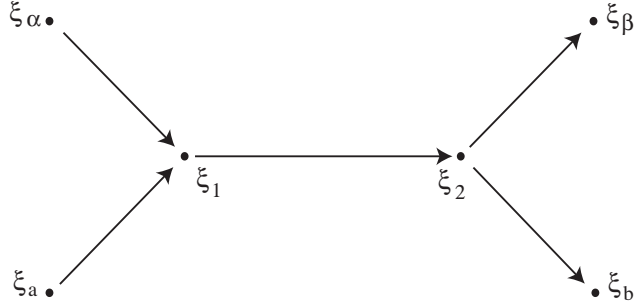}}
\caption{The common connection\label{commonconnection}}
\end{figure}
We assume $\xi_\alpha \neq \xi_a$ and $\xi_\beta \neq \xi_b$, but allow $\xi_\alpha=\xi_\beta$ or even $\xi_\alpha=\xi_2$ and $\xi_\beta=\xi_1$, and analogously for the other cycle. If $\xi_{\alpha}=\xi_2$ (and $\xi_{\beta}=\xi_1$), then the network contains the two-cycle $[\xi_1 \to \xi_2 \to \xi_1]$. Therefore, it cannot be obtained from the simplex method as pointed out in subsection \ref{simplex}. An example of such a network is the $(B_2^+,B_2^+)$ network studied in \cite{CastroLohse2014}, and the present results still hold for this and other networks with two-cycles.

Furthermore, suppose the following:

\paragraph{Assumption 1:} The $x_ax_\alpha$-plane is mapped into the $x_bx_\beta$-plane by the global map $\psi_{12}$ along the common connection.
\medbreak
This assumption is fulfilled by all of the networks in Castro and Lohse \cite[Theorem 3.7]{CastroLohse2015}. In order to describe the relevant sets succinctly, we state precisely what we mean by the word \emph{cusp}.
\begin{definition}
 Consider a straight line $g$ through the origin in $\R^2$ and a curve $\gamma$ that is tangent to $g$ at $(0,0)$ and contained entirely in one of the half planes generated by $g$. Then, by reflecting $\gamma$ along $g$, a neighbourhood of the origin is divided into two open sets (each with two connected components), as in Figure \ref{cusp}. We call the one that is asymptotically (for $\varepsilon \to 0$) larger in $B_\varepsilon(0)$ a \emph{thick cusp} and the other one a \emph{thin cusp tangent to $g$}.
\end{definition}
\begin{figure}[!htb]
\centerline{\includegraphics[width=0.3\textwidth]{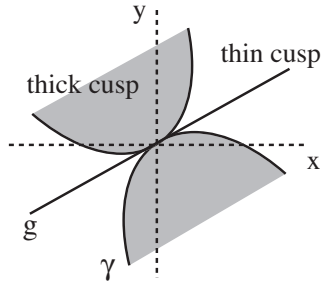}}
\caption{Thin and thick cusps\label{cusp}}
\end{figure}

Typically, cusps are given through curves of the form $y=\gamma(x)=x^\alpha$, the tangency axis then depending on $\alpha \gtrless 1$.

The following is a preliminary result.
\begin{lemma}\label{cusps}
 Let there be two curves defining cusps in $\R^2$ that are not tangent to the same line. Then in a sufficiently small neighbourhood of $(0,0)$
 \begin{itemize}
  \item each thick cusp contains the other thin cusp,
  \item the two thick cusps intersect in a set of positive measure,
  \item the two thin cusps do not intersect.
 \end{itemize}
\end{lemma}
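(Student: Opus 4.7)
The plan is to work in polar coordinates centred at the origin, translate the geometric tangency condition into a quantitative angular-wedge containment for each thin cusp, and then exploit $g_1\neq g_2$ to separate the two cusp systems angularly; the area estimate for the positive-measure claim then falls out almost for free.

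The core auxiliary step I would prove first: for each $i\in\{1,2\}$ and every $\eta>0$, there exists $\varepsilon_i(\eta)>0$ such that the thin cusp of $\gamma_i$ restricted to $B_{\varepsilon_i(\eta)}(0)$ lies inside the angular wedge of half-angle $\eta$ about the tangency line $g_i$. In coordinates where $g_i$ is the horizontal axis, the curve $\gamma_i$ can be written locally as a graph $y=\gamma_i(x)$ with $\gamma_i(x)/x\to 0$ as $x\to 0$ --- this ratio limit is exactly the analytic content of $\gamma_i$ being tangent to $g_i$ at the origin and lying in one half-plane of $g_i$. Hence $|\gamma_i(x)|\leq\eta|x|$ for $|x|$ small enough, and the thin cusp, being bounded by $\gamma_i$ and its reflection across $g_i$, is contained in $\{|y|\leq\eta|x|\}$, which is the claimed wedge.

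Since $g_1\neq g_2$, the two lines subtend a positive angle $2\theta_0>0$. Choose $\eta\in(0,\theta_0/2)$ and set $\varepsilon:=\min\{\varepsilon_1(\eta),\varepsilon_2(\eta)\}$. The $\eta$-wedges about $g_1$ and $g_2$ then meet only at the origin, so the two thin cusps are disjoint in $B_\varepsilon(0)\setminus\{0\}$; this is the third bullet. The first bullet follows immediately: the thick cusp of $g_i$ is by definition the complement of the thin cusp of $g_i$ in $B_\varepsilon(0)$, and the thin cusp of $g_j$ ($j\neq i$), being disjoint from the thin cusp of $g_i$, is therefore contained in the thick cusp of $g_i$.

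For the second bullet I would use an area estimate: the thin cusp of $g_i$ lies in an $\eta$-wedge whose area in $B_\varepsilon(0)$ is at most $2\eta\varepsilon^2$, so the union of the two thin cusps has area at most $4\eta\varepsilon^2$. Since this union is exactly the complement in $B_\varepsilon(0)$ of the intersection of the two thick cusps, that intersection has area at least $(\pi-4\eta)\varepsilon^2$, which is positive once $\eta<\pi/4$. The only real obstacle in the whole argument is the first step --- turning the geometric "tangency plus one-sidedness" condition into the analytic bound $|\gamma_i(x)|\leq\eta|x|$ --- but this is a direct consequence of the definition of tangency once one chooses adapted coordinates in which the cusp-defining curve is a graph over $g_i$.
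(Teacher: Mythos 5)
Your proof is correct and follows essentially the same route as the paper: the paper's entire argument is the observation that each thin cusp lies arbitrarily close to its tangency axis in $B_\varepsilon(0)$, which is precisely your angular-wedge containment $|\gamma_i(x)|\leq\eta|x|$, from which all three bullets follow. You have simply made the paper's one-line justification quantitative (including the explicit area bound $(\pi-4\eta)\varepsilon^2$ for the second bullet), which is a welcome level of detail but not a different method.
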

\begin{proof}
 All three statements follow because the thin cusp is arbitrarily close to its tangency axis in $B_\varepsilon(0)$ for $\varepsilon>0$ sufficiently small, see Figure \ref{cusp-intersection}.
\end{proof}
\begin{figure}[!htb]
\centerline{\includegraphics[width=0.3\textwidth]{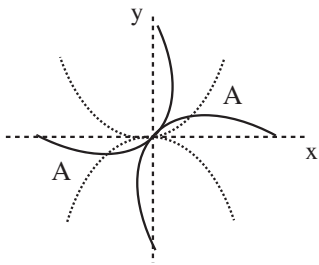}}
\caption{Intersecting cusps: region A is the intersection of thin cusps and occurs away from the origin.\label{cusp-intersection}}
\end{figure}

We now state and prove our main result.
\begin{theorem}\label{noswitching}
 In a simple heteroclinic network in $\R^n, n \geq 4$, with connections $[\xi_\alpha \to \xi_1 \to \xi_2 \to \xi_\beta]$ and $[\xi_a \to \xi_1 \to \xi_2 \to \xi_b]$ fulfilling Assumption 1, there is no switching along $[\xi_1 \to \xi_2]$.
\end{theorem}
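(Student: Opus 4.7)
My strategy is to show that, among the four paths obtained by pairing the two incoming connections at $\xi_1$ with the two outgoing connections at $\xi_2$, at least one of the two ``crossed'' paths $[\xi_\alpha\to\xi_1\to\xi_2\to\xi_b]$ and $[\xi_a\to\xi_1\to\xi_2\to\xi_\beta]$ fails to be shadowed by any trajectory in a sufficiently small neighbourhood of $[\xi_1\to\xi_2]$; this immediately excludes switching along the common connection. I would fix cross-sections $H_1^{\inn,\alpha}=\{x_\alpha=h\}$, $H_1^{\inn,a}=\{x_a=h\}$, $H_1^{\out,2}=\{x_2=h'\}$, and $H_2^{\inn,1}=\{x_1=h''\}$, and compute the local map $\phi_{\alpha12}$ using the linearisation at $\xi_1$: a point of $H_1^{\inn,\alpha}$ with initial $x_2$-coordinate $u$ and $x_a$-coordinate $y$ reaches $H_1^{\out,2}$ with $x_\alpha\approx h(u/h')^{c_{1\alpha}/e_{12}}$ and $x_a\approx y(u/h')^{c_{1a}/e_{12}}$. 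Eliminating $u$ shows that the image of $H_1^{\inn,\alpha}$ projects, in the $(x_\alpha,x_a)$-plane, to the cusp
\[
R_\alpha=\bigl\{\,|x_a|\leq C_1|x_\alpha|^{\mu_1}\,\bigr\},\qquad \mu_1=c_{1a}/c_{1\alpha},
\]
and symmetrically the image of $H_1^{\inn,a}$ is $R_a=\{|x_\alpha|\leq C_2|x_a|^{1/\mu_1}\}$, with constants $C_1,C_2$ that shrink with the size of the initial neighbourhood.

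By Assumption 1, $\psi_{12}$ is a linear isomorphism from the $(x_\alpha,x_a)$-plane onto the $(x_\beta,x_b)$-plane on $H_2^{\inn,1}$, so the cusp structure of $R_\alpha$ and $R_a$ is preserved; after a linear change of coordinates I may assume $x_\alpha\leftrightarrow x_\beta$ and $x_a\leftrightarrow x_b$. On $H_2^{\inn,1}$ the linearisation at $\xi_2$ then partitions a neighbourhood of the origin by the separatrix $|x_b|=D|x_\beta|^q$, $q=e_{2b}/e_{2\beta}$: trajectories exit toward $\xi_\beta$ in $\{|x_b|\leq D|x_\beta|^q\}$ and toward $\xi_b$ in its complement. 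Whether the boundary curves of $R_\alpha$, $R_a$ and the separatrix are tangent to the same coordinate axis depends on the positions of $\mu_1$ and $q$ relative to~$1$.

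Comparing exponents in the $(x_\beta,x_b)$-plane, a direct substitution shows $R_\alpha\cap\{|x_b|\geq D|x_\beta|^q\}$ is empty near the origin whenever $q<\mu_1$, while $R_a\cap\{|x_b|\leq D|x_\beta|^q\}$ is empty whenever $q>\mu_1$; the borderline case $q=\mu_1$ is resolved by choosing the initial neighbourhood small enough to force both intersections to be empty. When $\mu_1$ and $q$ lie on opposite sides of $1$ the same conclusion can be obtained more geometrically via Lemma~\ref{cusps}, which says that two thin cusps tangent to different coordinate axes are disjoint near the origin. In every case at least one of the crossed combinations is not shadowed, so switching along $[\xi_1\to\xi_2]$ does not occur. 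The main technical obstacle I expect is the bookkeeping of the thin/thick structure when the cusps are tangent to the same coordinate axis (where Lemma~\ref{cusps} does not apply), in particular handling the borderline $q=\mu_1$ via the smallness of $C_1$ and $C_2$; a secondary concern is confirming that a non-diagonal linear part of $\psi_{12}$ on the $(x_\alpha,x_a)$-plane does not distort the tangency analysis, but this only amounts to rescaling the cusp curves and does not affect the exponent $\mu_1$.
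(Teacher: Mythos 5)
Your treatment of the case where $\psi_{12}$ acts diagonally on the relevant plane is essentially the paper's argument for type $Z$ cycles: the same cusps $R_\alpha$, $R_a$ with exponent $\mu_1=c_{1a}/c_{1\alpha}$, the same separatrix with exponent $q=e_{2b}/e_{2\beta}$, and the same exponent comparison showing that exactly one of the crossed paths $\alpha12b$, $a12\beta$ is lost according to $q\gtrless\mu_1$. Up to that point the proposal is sound (the borderline $q=\mu_1$ via shrinking $C_1,C_2$ is fine, and non-generic anyway).

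The gap is in your dismissal of a non-diagonal $\psi_{12}$ as ``only a rescaling''. Assumption 1 forces $\psi_{12}$ to map the $x_ax_\alpha$-plane onto the $x_bx_\beta$-plane, but generically it does so with $\kappa_{11}\kappa_{12}\kappa_{21}\kappa_{22}\neq 0$, i.e.\ it does \emph{not} send coordinate axes to coordinate axes. A point $(t,st^{\mu_1})$ of $R_\alpha$ is sent to $(\kappa_{11}t+\kappa_{12}st^{\mu_1},\,\kappa_{21}t+\kappa_{22}st^{\mu_1})$, so $\psi_{12}(R_\alpha)$ is a cusp tangent to the off-axis line $x_b/x_\beta=\kappa_{21}/\kappa_{11}$, not to the $x_\beta$-axis; this is a change of tangency direction, not a rescaling, and it cannot be undone by a coordinate change on $H_2^{\inn,1}$ without simultaneously destroying the coordinate form of the separatrix $|x_b|=D|x_\beta|^q$, which is tied to the eigendirections at $\xi_2$. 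Consequently your announced strategy — that at least one of the two \emph{crossed} paths always fails — is false for such $\psi_{12}$: for instance, if $C_{a1}$ and $E_{2b}$ are both thick cusps, then $\psi_{12}(C_{a1})$ meets both $E_{2b}$ and $E_{2\beta}$ and $\psi_{12}(C_{\alpha1})$ meets $E_{2b}$, so $a12b$, $a12\beta$ and $\alpha12b$ are all realized and the unique missing path is the uncrossed $\alpha12\beta$ (a thin cusp tangent to an off-axis line cannot meet the thin cusp $E_{2\beta}$ tangent to the $x_\beta$-axis, by Lemma~\ref{cusps}). The repair is exactly the paper's case split: if the tangency lines of $\psi_{12}(\gamma_1)$ and $\gamma_2$ differ, the two thin cusps are disjoint near the origin by Lemma~\ref{cusps} (which applies to arbitrary lines, not just coordinate axes); if they coincide, one cusp contains the other. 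Either way \emph{some} path among the four is unrealized, which is all the theorem needs. A minor omission: you should also justify why restricting the $(n-1)$-dimensional cross-sections to these two directions loses nothing, i.e.\ that the extra coordinates cannot restore an intersection that is empty in the plane.
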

\begin{proof}
 For both cycles we define $(n-1)$-dimensional cross sections transverse to the flow, and local maps (in local coordinates) between them in the standard way. We claim, and justify at the end of this proof, that only the two directions spanning the planes in Assumption 1 are relevant. Restricting to these and using the standard notation of $-c_{jk}$ and $e_{jl}$ for the contracting and expanding eigenvalues at $\xi_j$ we obtain:
\begin{align*}
&\phi_{\alpha 12}: \widehat{H}_1^{\inn, \alpha} \to \widehat{H}_1^{\out,2}, &&(x,y) \mapsto \left( x^\frac{c_{1\alpha}}{e_{12}},yx^\frac{c_{1a}}{e_{12}} \right) \\
&\phi_{a12}: \widehat{H}_1^{\inn, a} \to \widehat{H}_1^{\out,2}, &&(x,y) \mapsto \left(yx^\frac{c_{1\alpha}}{e_{12}},x^\frac{c_{1a}}{e_{12}}\right)\\
&\phi_{12\beta}: \widehat{H}_2^{\inn, 1} \to \widehat{H}_2^{\out,\beta}, &&(x,y) \mapsto \left(x^\frac{c_{21}}{e_{2\beta}},yx^{-\frac{e_{2b}}{e_{2\beta}}}\right)\\
&\phi_{12b}: \widehat{H}_2^{\inn, 1} \to \widehat{H}_2^{\out,b}, &&(x,y) \mapsto \left(y^\frac{c_{21}}{e_{2b}},xy^{-\frac{e_{2\beta}}{e_{2b}}}\right)
\end{align*}
Here $\widehat{H}$ denotes the restriction to the relevant plane in the cross section $H$. The maps are well-defined by Assumption 1. We have restricted further to $x,y \geq 0$, but the corresponding expressions for different signs can be derived directly from the maps above. The curve $y=\gamma_1(x)=x^\frac{c_{1a}}{c_{1\alpha}}$ is tangent to one of the coordinate axes at $(0,0)$ and divides $\widehat{H}_1^{\out,2}$ into two regions, separating points coming to $\xi_1$ from $\xi_\alpha$ from those coming from $\xi_a$. We denote these respectively by
\begin{align*}
C_{\alpha 1}&:= \left\{(x,y) \in \widehat{H}_1^{\out,2} \mid y<\gamma_1(x) \right\},\\
C_{a 1}&:= \left\{(x,y) \in \widehat{H}_1^{\out,2} \mid y>\gamma_1(x) \right\}.
\end{align*}
In the same way, the curve $y=\gamma_2(x)=x^\frac{e_{2b}}{e_{2\beta}}$ separates points going from $\xi_2$ to $\xi_\beta$ from those going to $\xi_b$ in $\widehat{H}_2^{\inn,1}$, giving rise to
\begin{align*}
E_{2 \beta}&:= \left\{(x,y) \in \widehat{H}_2^{\inn,1} \mid y<\gamma_2(x) \right\},\\
E_{2b}&:= \left\{(x,y) \in \widehat{H}_2^{\inn,1} \mid y>\gamma_2(x) \right\}.
\end{align*}
These sets are the domains of $\phi_{12b}$ and $\phi_{12\beta}$.

The global map $\psi_{12}: \widehat{H}_1^{\out,2} \to \widehat{H}_2^{\inn,1}$ is the same for both cycles and determines how points in a neighbourhood of the network follow the connection $[\xi_1 \to \xi_2]$. We investigate the intersection $\psi_{12}(\gamma_1) \cap \gamma_2$ to identify four regions of points in $\widehat{H}_2^{\inn,1}$, following the equilibria in the network in the order $\alpha12\beta$, $\alpha12b$, $a12\beta$ or $a12b$.

The $xy$-plane in $\widehat{H}_1^{\out,2}$ is split into two cusp-shaped regions by $\gamma_1$, which is tangent to the horizontal or vertical axis, depending on $c_{1a}\gtrless c_{1\alpha}$. Then $\psi_{12}(\gamma_1)$ is tangent to the image of this tangency axis.

The global map $\psi_{12}$ is an invertible linear map
\begin{equation*}
 \psi_{12}(x,y)=(\kappa_{11}x+\kappa_{12}y,\; \kappa_{21}x+\kappa_{22}y),
\end{equation*}
with $\kappa_{11}\kappa_{12}\kappa_{21}\kappa_{22} \neq 0$ generically. The coordinate axes in $\widehat{H}_1^{\out,2}$ are not necessarily fixed by $\psi_{12}$, so that the tangency axes in $\widehat{H}_2^{\inn,1}$ may be distinct. If that is the case, by Lemma \ref{cusps} the thin cusps do not intersect and so one of the paths is missing and there is no switching. If the tangency axes coincide, one of the cusps is always contained in the other, and again one of the paths is missing.
\medbreak
Finally, we justify our initial claim. Taking into account the remaining dimensions of the cross sections does not relax the contraints imposed on the $xy$-plane for points to be in the $C$- or $E$-sets. So the sets that do not intersect, ensuring the absence of switching, cannot intersect in a full neighbourhood of the origin.
\end{proof}

While we have presented the proof in a short version, more detailed information can be obtained about which paths are followed or not, when we distinguish the case where $\psi_{12}$ is the identity map and thus fixes the coordinate axes in $\widehat{H}_1^{\out,2}$. This is the case for type $Z$ cycles. Then the curve $\gamma_1$ is tangent to one of the axes and $\psi_{12}(\gamma_1)$ is tangent to the same axis in $\widehat{H}_2^{\inn,1}$. One of the cusps is contained in the other, depending on the relative size of the eigenvalues:
\begin{align*}
\frac{c_{1a}}{c_{1\alpha}}<\frac{e_{2b}}{e_{2\beta}} \quad \Leftrightarrow \quad \psi_{12}(C_{a 1}) \subset E_{2b} \quad \text{and} \quad \psi_{12}(C_{\alpha 1}) \supset E_{2 \beta}\\
\frac{c_{1a}}{c_{1\alpha}}>\frac{e_{2b}}{e_{2\beta}} \quad \Leftrightarrow \quad \psi_{12}(C_{a 1}) \supset E_{2b} \quad \text{and} \quad \psi_{12}(C_{\alpha 1}) \subset E_{2 \beta}
\end{align*}
So the sets $\psi_{12}(C_{a 1}) \cap E_{2b}$ and $\psi_{12}(C_{\alpha 1}) \cap E_{2 \beta}$ are non-empty. In the first instance $\psi_{12}(C_{a 1}) \subset E_{2b}$ prevents $\psi_{12}(C_{a1})$ from intersecting $E_{2 \beta}$. Analogously, in the second instance $\psi_{12}(C_{\alpha 1}) \cap E_{2b}= \emptyset$. This means there are always trajectories following the paths $a12b$ and $\alpha12\beta$, but only one of $a12\beta$ and $\alpha12b$. That is, there is one cycle from which it is impossible to switch to the other. This is in accordance with the findings for $(B_3^-, B_3^-)$ networks in \cite{KS}.

If $\psi_{12}$ is not the identity, as for type $A$ cycles, suppose first that $C_{a1}$ and $E_{2b}$ are thick cusps. Then $\psi_{12}(C_{a 1})$ is also thick and regardless of $\psi_{12}$ we have $\psi_{12}(C_{a 1}) \cap E_{2b} \neq \emptyset$, as well as $\psi_{12}(C_{\alpha 1}) \cap E_{2b} \neq \emptyset$ and $\psi_{12}(C_{a 1}) \cap E_{2 \beta} \neq \emptyset$, but $\psi_{12}(C_{\alpha 1}) \cap E_{2 \beta} = \emptyset$. So $a12b$, $\alpha12b$ and $a12\beta$ are realized, but not $\alpha12\beta$. 
In the same way, if $C_{a1}$ and $E_{2b}$ are thin cusps, all paths except $a12b$ are realized, which corresponds to $\psi_{12}(C_{a 1}) \cap E_{2b} = \emptyset$. Finally, for a thin cusp $C_{a1}$ and a thick cusp $E_{2b}$, the former generically lies in the latter. So all paths except for $a12\beta$ are realized. If it is the other way around, then $\alpha12b$ is not realized.

In contrast to type $Z$ cycles, for type $A$ cycles the missing path can be any of the four possible ones, depending on the relative size of the eigenvalues at $\xi_1$ and $\xi_2$. This means that for $A$ networks there are configurations in which it is possible to switch from either cycle to the other. But the price to pay for this is that for one cycle \emph{all} trajectories switch to the other one (and possibly return later).

\begin{remark}\label{remark}
 In the proof of Theorem \ref{noswitching} we use the fact that the connections depicted in Figure \ref{commonconnection} have the properties of connections in simple cycles, but make no assumption concerning the remaining connections of either cycle. Thus, Theorem \ref{noswitching} applies to networks other than simple.
\end{remark}

The absence of switching still holds if there are multiple common connections in a row, as long as Assumption 1 is fulfilled at the first and last common node.
\begin{corollary}
 If in Theorem \ref{noswitching} the connection $[\xi_1 \to \xi_2]$ is replaced by a sequence of connections $[\xi_1 \to \xi_1^* \to \xi_2^* \to \ldots \to \xi_k^* \to \xi_2]$, $k<\infty$, then there is no switching along this sequence.
\end{corollary}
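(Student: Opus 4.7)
The plan is to reduce the corollary to Theorem \ref{noswitching} by collapsing the intermediate connections into a single effective transition map. Along the common subsequence $[\xi_1 \to \xi_1^* \to \ldots \to \xi_k^* \to \xi_2]$ both cycles trace the same trajectory, so at every intermediate node $\xi_j^*$ the incoming direction from $\xi_{j-1}^*$ and the outgoing direction to $\xi_{j+1}^*$ are identical for the two cycles. Hence the distinguishing data (i.e.\ ``which cycle does a point belong to?'') is carried entirely in the remaining directions of the cross sections, and the local maps at the $\xi_j^*$ act on these directions in the same way regardless of cycle.

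First, I would set up cross sections $H_j^{\inn,j-1}$ and $H_j^{\out,j+1}$ at each intermediate node and write down the local maps $\phi_j$ and the global maps $\psi_{j,j+1}$ along the common connections in the standard coordinates used in Theorem \ref{noswitching}. I would then form the composition
\begin{equation*}
\Psi := \psi_{k^*,2} \circ \phi_{k^*} \circ \psi_{k^*-1,k^*} \circ \cdots \circ \phi_{1^*} \circ \psi_{1,1^*} : \widehat{H}_1^{\out,1^*} \longrightarrow \widehat{H}_2^{\inn,k^*},
\end{equation*}
restricted to the two-dimensional plane generated by the directions picking out $\xi_\alpha$ versus $\xi_a$ at $\xi_1$. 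The hypothesis that Assumption 1 holds at the first and last common node means exactly that this plane is mapped onto the two-dimensional plane in $\widehat{H}_2^{\inn,k^*}$ that distinguishes outgoing connections to $\xi_\beta$ versus $\xi_b$; so $\Psi$ is well-defined between these two planar slices.

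Next, I would verify that $\Psi$ preserves the cusp-tangent-to-a-line structure that the argument of Theorem \ref{noswitching} relies on. Each local map $\phi_j$ is, in the relevant planar coordinates, a monomial map $(x,y)\mapsto(x^{a_j}y^{b_j}, x^{c_j}y^{d_j})$, and each global map is an invertible linear map. The composition of such maps sends a curve of the form $y=x^\alpha$ to a curve which (after simplification) is again tangent at the origin to a straight line through the origin, defining a cusp in the image. Consequently, the cusp-defining curve $\gamma_1$ in $\widehat{H}_1^{\out,1^*}$ is mapped by $\Psi$ to a cusp-defining curve $\Psi(\gamma_1)$ in $\widehat{H}_2^{\inn,k^*}$, which has to be compared to the curve $\gamma_2$ that separates points going to $\xi_\beta$ from those going to $\xi_b$.

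Finally, the conclusion follows verbatim from the end of the proof of Theorem \ref{noswitching}: applying Lemma \ref{cusps} to $\Psi(\gamma_1)$ and $\gamma_2$, either their tangency axes are distinct and the two thin cusps are disjoint, or the axes coincide and one cusp is contained in the other; in both cases at least one of the four possible paths $\alpha 12 \beta$, $\alpha 12 b$, $a 12 \beta$, $a 12 b$ (now understood as paths through the full common sequence) is not shadowed. The main obstacle I expect is the algebraic verification that the cusp-and-tangency structure is preserved under the composition $\Psi$; this is best handled by an induction on $k$, using the fact that at each step we only compose with a monomial map and an invertible linear map, for which the required preservation property has already been established in the proof of Theorem \ref{noswitching}.
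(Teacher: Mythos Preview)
The paper gives no proof of this corollary; it is asserted as a direct extension of Theorem~\ref{noswitching}, so your strategy of collapsing the intermediate transitions into a single map $\Psi$ and then rerunning the cusp argument is exactly what the authors have in mind.

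There is, however, a concrete gap in your execution. You assert that each intermediate local map $\phi_{j^*}$ is, ``in the relevant planar coordinates'', a monomial map $(x,y)\mapsto(x^{a_j}y^{b_j},x^{c_j}y^{d_j})$. This is not right. At an intermediate node $\xi_{j^*}$ the directions $x_a$ and $x_\alpha$ are \emph{transverse}, and the local map acts on them as
\[
(x_a,x_\alpha)\;\longmapsto\;\bigl(x_a\,v^{\,t_a/e_{j^*}},\;x_\alpha\,v^{\,t_\alpha/e_{j^*}}\bigr),
\]
where $v$ is the \emph{expanding} coordinate at $\xi_{j^*}$ (the one pointing toward $\xi_{(j+1)^*}$). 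This third coordinate $v$ is not in the $(x_a,x_\alpha)$-plane, so the restriction of $\Psi$ to that plane is not a well-defined self-map, and in particular $\Psi$ is not a composition of planar monomial and linear maps as you describe. Consequently $\Psi(\gamma_1)$ is not a curve in the $(x_b,x_\beta)$-plane but a hypersurface of the form $X_a = X_\alpha^{\,p}\cdot W^{\,r}$ that also depends on the coordinates $W$ associated with the intermediate nodes.

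The substantive step that your sketch does not cover is therefore showing that, despite this coupling to extra coordinates, one of the four sets $\Psi(C_{\cdot 1})\cap E_{2\cdot}$ is still empty in a full neighbourhood of the origin. Lemma~\ref{cusps} as stated is purely planar and does not immediately apply; you would need either a higher-dimensional analogue or an argument that the $W$-dependence can be controlled uniformly. The induction you propose does not handle this, because each inductive step introduces dependence on a new coordinate rather than remaining inside a fixed two-dimensional slice.
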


Moreover, note that the sign of transverse eigenvalues is irrelevant. In particular, if there are more incoming connections at $\xi_1$ or more outgoing connections at $\xi_2$, the result still holds, because this only increases the number of sequences that would have to be shadowed in order to have switching along $[\xi_1 \to \xi_2]$.

We have thus shown that the key factors inhibiting switching are the presence of a common connection and the fact that the respective incoming and outgoing directions span the same space. The next two subsections show that, on the one hand, this geometry is essential, and, on the other hand, that it is satisfied for a large class of cycles.

\subsection{Switching in the House network}
The \emph{House network} has five equilibria, one on each coordinate axis in $\R^5$. It can be generated with the simplex method from \cite{AP} and consists of a $B_3^-$ and a $C_4^-$ cycle (both of type $Z$), see Figure \ref{house}. There is a common connection, but Assumption 1 does not hold.

\begin{figure}[!htb]
\centerline{\includegraphics[width=0.4\textwidth]{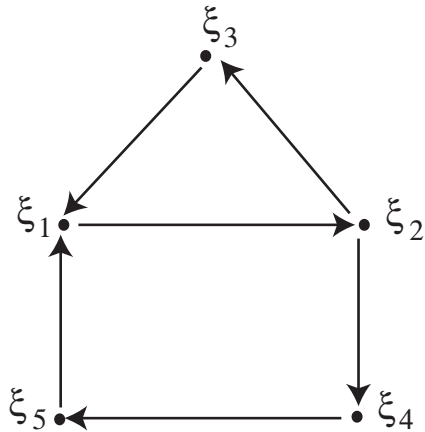}}
\caption{The House network\label{house}}
\end{figure}

\begin{lemma}\label{switch-house}
 There is switching along the common connection in the House network.
\end{lemma}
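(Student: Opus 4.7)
The plan is to exhibit, for each of the four possible paths through the common connection — $\alpha 1 2 \beta$, $\alpha 1 2 b$, $a 1 2 \beta$, and $a 1 2 b$ — an open set of initial conditions whose trajectory shadows it; this is exactly switching along $[\xi_1 \to \xi_2]$.

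The obstruction in the proof of Theorem \ref{noswitching} relied on restricting to the $2$-dimensional slices $\widehat{H}$ containing only the incoming directions at $\xi_1$ and the outgoing directions at $\xi_2$. The failure of Assumption 1 in the House network means there is a direction (call it $x_\tau$) that is expanding at $\xi_2$ and distinguishes the two outgoing connections there, but which at $\xi_1$ is merely a transverse direction. The idea is therefore to work in the full $(n-1)$-dimensional cross sections rather than in the planes $\widehat{H}$. Since the House network is simplex-generated and $\Z_2^5$-symmetric, the global map $\psi_{12}$ can be taken to match coordinates, i.e.\ to act as the identity on the eigendirections shared between $\xi_1$ and $\xi_2$.

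I then write down the local maps $\phi_{\alpha 12}$ and $\phi_{a 12}$ in eigen-coordinates at $\xi_1$, keeping track of the $x_\tau$-component. Because $x_\tau$ is a free coordinate in $H_1^{\inn, \alpha}$ and $H_1^{\inn, a}$, and evolves through the local flow as a power of the transition parameter (with exponent determined by the transverse eigenvalue at $\xi_1$ in the $x_\tau$-direction and by the expanding eigenvalue $e_{12}$), the $x_\tau$-component of the image in $H_2^{\inn, 1}$ can be prescribed independently of the remaining coordinates. Consequently, for \emph{either} incoming direction, choosing the initial $x_\tau$-coordinate small enough places the image in $E_{2\beta}$, while a sufficiently large choice puts it across the separating curve $\gamma_2$ into $E_{2b}$. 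Both regimes occupy open subsets of the relevant incoming cross section, so all four paths are realized.

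The main obstacle is verifying that the dependence of the output $x_\tau$-coordinate on the input is nondegenerate, so that varying $x_\tau$ in $H_1^{\inn,\alpha}$ (respectively $H_1^{\inn,a}$) genuinely sweeps through both sides of $\gamma_2$ in $H_2^{\inn,1}$. This comes down to the transverse eigenvalue at $\xi_1$ being nonzero (which is generic for a simple cycle) and to a careful bookkeeping of the powers of the transition parameter, to ensure that neither $x_\tau$ nor the competing outgoing coordinate dominates uniformly. Once this is in place, the cusp-separation obstruction used in Theorem \ref{noswitching} no longer forbids any of the four path combinations, and switching along $[\xi_1 \to \xi_2]$ follows.
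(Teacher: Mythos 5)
Your proposal is correct and takes essentially the same route as the paper: both arguments work in the full cross sections and rest on the observation that the incoming separation lives in the $x_3x_5$-plane while the outgoing separation lives in the $x_3x_4$-plane, so the coordinate distinguishing the outgoing connections (your $x_\tau$, here $x_4$) is a free transverse direction at $\xi_1$ that can be tuned independently to place the image on either side of $\gamma_2$. The paper phrases this as the two dividing surfaces being cylinders over cusps in different coordinate planes, yielding four non-empty open intersection regions, which is the same decoupling you describe.
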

\begin{proof}
 Let the $B_3^-$ cycle be $[\xi_1 \to \xi_2 \to \xi_3]$ and the $C_4^-$ cycle $[\xi_1 \to \xi_2 \to \xi_4 \to \xi_5]$, so that the common connection is $[\xi_1 \to \xi_2]$. The $x_ax_\alpha$-plane in Assumption 1 is here the $x_3x_5$-plane, whereas the $x_bx_\beta$-plane is the $x_3x_4$-plane. Since the cycles are of type $Z$, the global map $\psi_{12}$ is the identity and hence Assumption 1 does not hold.
 
 We label the cross sections $H_j^{\inn,i}$ and $H_j^{\out,k}$ just as before. They are now four-dimensional and there are three relevant directions (ignoring the radial one as usual), so we can think of them as cubes. Then with the same reasoning as above, $H_1^{\out,2}$ is split into the set of points coming from $\xi_3$ and from $\xi_5$ by a surface given through a curve in the $x_3x_5$-plane that extends trivially in the $x_4$-direction, see Figure \ref{H1-1}, which shows an octant of a neighbourhood of the origin in $H_1^{\out,2}$.
 \begin{figure}[!htb]
\centerline{\includegraphics[width=0.4\textwidth]{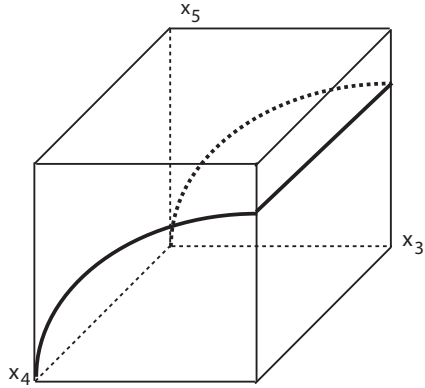}}
\caption{Splitting of $H_1^{\out,2}$ into the sets of points coming from $\xi_3$ and from $\xi_5$. The shape of the dividing surface depends on the relative magnitudes of the eigenvalues $-c_{13}$ and $-c_{15}$ at $\xi_1$.\label{H1-1}}
\end{figure}
 
 Similarly, $H_2^{\inn,1}$ is split into points going to $\xi_3$ and $\xi_4$, respectively, by a surface given through a curve in the $x_3x_4$-plane extending trivially in the $x_5$-direction.
 
 Since the cycles are of type $Z$, we may assume all global maps to be the identity. Then there are four regions of intersections in $H_2^{\inn,1}$, corresponding to all four combinations of coming from $\xi_3$ or $\xi_5$ and going to $\xi_3$ or $\xi_4$, see Figure \ref{H1-2}:
 \begin{itemize}
  \item from $\xi_3$ to $\xi_3$: sufficiently close to the $x_3$-axis
  \item from $\xi_3$ to $\xi_4$: sufficiently close to the $x_4$-axis
  \item from $\xi_5$ to $\xi_3$: sufficiently close to the $x_5$-axis
  \item from $\xi_5$ to $\xi_4$: sufficiently close to the $x_4x_5$-plane, but away from the $x_5$-axis
 \end{itemize}
  \begin{figure}[!htb]
\centerline{\includegraphics[width=0.4\textwidth]{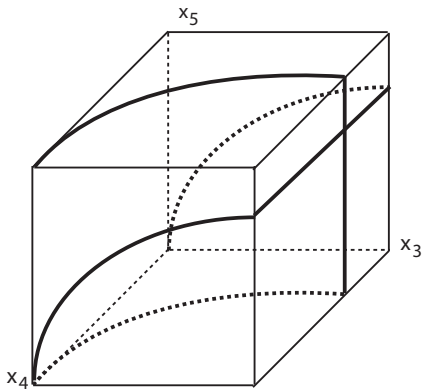}}
\caption{Intersection of the splitting into points coming from $\xi_3$ or $\xi_5$ and going to $\xi_3$ or $\xi_4$ in $H_2^{\inn,1}$.\label{H1-2}}
\end{figure}
 This shows that all possible sequences are shadowed by trajectories and thus we have switching along $[\xi_1 \to \xi_2]$. Note that our reasoning does not depend on the cusps being thin or thick.
\end{proof}

\subsection{Generality of Assumption 1}
We now illustrate that Assumption 1 holds for a large class of networks. Note that the global maps $\psi_{ij}$ map $\Sigma_i$-isotypic components into themselves. Therefore, if all cycles in the network are of type $Z$, they preserve coordinate axes, because all isotypic components except for $\Fix(\Sigma_i)$ are one-dimensional. So, for type $Z$ cycles, Assumption 1 requires that the $x_ax_\alpha$-plane equals the $x_bx_\beta$-plane, which essentially means that the $x_\alpha$-axis is the same as the $x_\beta$-axis and similarly for the $x_a$- and $x_b$-axes. In the strict context of simple networks, this applies to all type $Z$ cycles with a Kirk and Silber subnetwork, similar to \cite[Theorem 1]{Aguiar}. However, as noted in Remark \ref{remark}, our result still holds in a wider context. In particular, it also applies to networks with equilibria $\xi_\alpha \neq \xi_\beta$, but both on the same coordinate axis, which is not allowed in simple networks.

If a cycle is not of type $Z$, Assumption 1 still holds as long as the relevant coordinate subspaces are preserved. This allows for isotypic decompositions into arbitrary components with the only restriction being that there is a two-dimensional $\Sigma_1$-isotypic component containing the eigenspaces corresponding to $c_{a1}$ and $ c_{\alpha 1}$ that is mapped by $\Psi_{12}$ into the $\Sigma_2$-isotypic component containing those corresponding to $e_{2b}$ and $e_{2 \beta}$. This is satisfied for a number of cycles of type $A$ and does not require a Kirk and Silber subnetwork.

Furthermore, in $\R^4$ Assumption 1 holds for all simple networks listed in \cite{CastroLohse2014}.

\medbreak
We now briefly comment on the necessity of Assumption 1 for the absence of switching. For this, we look at networks with a connection structure as described above, and restrict to the case where the global maps equal the identity. 

Looking at the local maps it is easy to see that points in $H_1^{\out,2}$ coming from $\xi_i$ are located near a hyperplane not containing the $x_j$-axis, where $j \neq i$ and $i,j$ are either $\alpha$ or $a$. Also, points in $H_2^{\inn,1}$ going to $\xi_k$ are located near a hyperplane not containing the $x_l$-axis, where $l \neq k$ and $k,l$ are either $\beta$ or $b$. If Assumption 1 does not hold, the $x_ax_\alpha$-plane is not taken to the $x_bx_\beta$-plane and so, at most one of these axes is common to the restricted cross sections $\widehat{H}_1^{\out,2}$ and $\widehat{H}_2^{\inn,1}$. If exactly one axis is common, the proof for switching along $[\xi_1 \to \xi_2]$ follows as for the House network. If no axis is common then points go from $\xi_i$ to $\xi_k$ if they are near the $x_ix_k$-plane.

This shows that, under these circumstances, breaking Assumption 1 creates switching along the connection. We do not address the case of general global maps.

\section{Dynamics near the Bowtie network}\label{section-bowtie}
The purpose of this section is to provide an illustration of some types of behaviour that can be observed near a simple network in $\R^5$ that does not satisfy Assumption 1.

The Bowtie network consists of two cycles, each with three equilibria, connected at an equilibrium as depicted in Figure \ref{bowtie}. The lowest dimension for this construction is $\R^5$. Considering the cycles' position in the network, we name them the $L$-cycle and the $R$-cycle. With the chosen labelling of the equilibria, the $R$-cycle is exactly the $\xi_3$-cycle in Kirk and Silber \cite{KS} and Castro and Lohse \cite{CastroLohse2014}, with an extra transverse dimension.

Recall that putting the $R$- and $L$-cycles together in a network in $\R^4$ by means of a common connecting trajectory, and ensuring that all transverse eigenvalues are negative whenever possible, leads to the dynamics studied by Kirk and Silber \cite{KS}. The asymptotic behaviour of points near the network is restricted to going around one of the two cycles and an orbit starting near one of the cycles and switching to the other one will never switch back. We show that by reducing the common elements between the cycles to a node (adding an extra dimension is necessary to achieve this), we observe much richer dynamics near the network.

\begin{figure}[!htb]
\centerline{\includegraphics[width=0.35\textwidth]{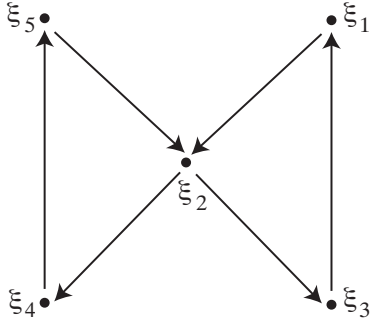}}
\caption{The Bowtie network\label{bowtie}}
\end{figure}

In order to describe the dynamics near the network, we use the notation for local and global maps from Section \ref{prelim}, all of which are listed in Appendix A. The $\Z_2^n$ symmetry allows us to consider the global maps $\psi_{ij}$ as equal to the identity and thus we freely identify $H_i^{\out,j}$ with $H_{j}^{\inn,i}$.

We further denote the return map around the $R$-cycle as
$h_R: \; H_1^{\out,2} \rightarrow H_1^{\out,2}$ given by $h_R=\phi_{312} \circ \phi_{231} \circ \phi_{123}$. Analogously, the return map around the $L$-cycle is $h_L: \; H_5^{\out,2} \rightarrow H_5^{\out,2}$ given by $h_L= \phi_{452} \circ \phi_{245} \circ \phi_{524}$.

In order to study the transitions from one cycle to the other, we define $g_{RL}: H_1^{\out,2} \rightarrow H_5^{\out,2}$, taking points from a neighbourhood of the $R$-cycle to a neighbourhood of the $L$-cycle, by $g_{RL}=\phi_{452} \circ \phi_{245} \circ \phi_{124}$. In the opposite direction, we define $g_{LR}: H_5^{\out,2} \rightarrow H_1^{\out,2}$ by $g_{LR}=\phi_{312} \circ \phi_{231} \circ \phi_{523}$.

\begin{figure}[!htb]
\centerline{\includegraphics[width=0.5\textwidth]{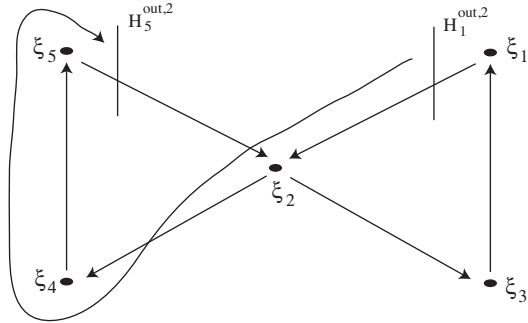}}
\caption{The map $g_{RL}$.\label{gRL}}
\end{figure}

The following parameters, dependent on the eigenvalues at the nodes, will be of use for our study. We use the symbol $\tilde{\mbox{}} $ for objects concerning the $R$-cycle:
$$
\begin{array}{lcl}
\rho=\dfrac{c_{42}c_{54}c_{25}}{e_{24}e_{45}e_{52}} & \mbox{\hspace{2cm}} & \tilde{\rho} = \dfrac{c_{32}c_{13}c_{21}}{e_{23}e_{31}e_{12}} \\
& & \\
\nu=-\dfrac{e_{23}}{e_{24}}+\dfrac{c_{25}c_{43}}{e_{24}e_{45}}+\dfrac{c_{53}c_{42}c_{25}}{e_{45}e_{24}e_{52}}& \mbox{\hspace{2cm}} & \tilde{\nu}=-\dfrac{e_{24}}{e_{23}}+\dfrac{c_{21}c_{34}}{e_{23}e_{31}}+\dfrac{c_{14}c_{32}c_{21}}{e_{31}e_{23}e_{12}} \\
& & \\
\mu=\dfrac{c_{21}}{e_{24}}+\dfrac{c_{25}c_{41}}{e_{24}e_{45}}+\dfrac{c_{51}c_{42}c_{25}}{e_{45}e_{24}e_{52}}& \mbox{\hspace{2cm}} & \tilde{\mu}=\dfrac{c_{25}}{e_{23}}+\dfrac{c_{21}c_{35}}{e_{23}e_{31}}+\dfrac{c_{15}c_{32}c_{21}}{e_{31}e_{23}e_{12}} \\
& & \\

\delta = \dfrac{c_{43}}{e_{45}}+\dfrac{c_{53}c_{42}}{e_{52}e_{45}}-\dfrac{e_{23}c_{54}c_{42}}{e_{52}e_{45}e_{24}} & \mbox{\hspace{2cm}} & \tilde{\delta} = \dfrac{c_{34}}{e_{31}}+\dfrac{c_{14}c_{32}}{e_{12}e_{31}}-\dfrac{e_{24}c_{13}c_{32}}{e_{12}e_{31}e_{23}} \\
\end{array}
$$
Note that even though we use some of the letters in \cite{KS}, the parameters pertaining to the $L$-cycle depend on eigenvalues different from those in \cite{KS}. Our choice of label for the nodes of the network does, however, ensure that the parameters pertaining to the $R$-cycle coincide with those pertaining to the $\xi_3$-cycle in \cite{KS}. The main point for those readers familiar with \cite{KS} is that for the Bowtie network the sign of $\delta $ is independent of that of $\tilde{\delta}$.
The following relations are also of use:
$$
\frac{e_{24}}{e_{23}}(\tilde{\rho}-1)-\tilde{\nu} = -\frac{c_{21}}{e_{23}}\tilde{\delta} \; \; \mbox{and  } \; \; \; \frac{e_{23}}{e_{24}}(\rho-1)-\nu = -\frac{c_{25}}{e_{24}}\delta.
$$

In our study we make use of the following convention:
\begin{enumerate}
	\item[(i)]  A point {\em visits} the $R$-cycle if its trajectory follows the sequence $[\xi_3 \rightarrow \xi_1]$. Analogously, a point {\em visits} the $L$-cycle if its trajectory follows the sequence $[\xi_4 \rightarrow \xi_5]$.
	\item[(ii)]  A point {\em takes a turn} around the $R$-cycle if its trajectory follows the sequence $[\xi_1 \rightarrow \xi_2 \rightarrow \xi_3]$. Analogously, a point {\em takes a turn} around the $L$-cycle if its trajectory follows the sequence $[\xi_5 \rightarrow \xi_2 \rightarrow \xi_4]$.
\end{enumerate}

We use a sequence of the letters $L$ and $R$ to describe the behaviour of a trajectory in a neighbourhood of the network, writing an $L$ for each visit to the left cycle and an $R$ for each visit to the right. For example, the sequence $RLR$ represents a trajectory visiting the right cycle, then the left and then the right again.

Note that there is switching at the node $\xi_2$ in the Bowtie network, i.e.\ there are initial conditions following the sequences $LL$, $RR$, $LR$ and $RL$. This is clear because the four local maps around $\xi_2$ all have domains of positive measure. For the Bowtie network a new definition of switching applies, namely \emph{switching along a cycle}. This is the natural extension of switching along a connection to a sequence of connections forming a heteroclinic cycle. The main point of this section is to prove switching along the cycle for the Bowtie network and point out some of its consequences.

\begin{lemma}
 There is switching along each cycle in the Bowtie network.
\end{lemma}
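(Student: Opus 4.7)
By the structural symmetry of the Bowtie network under the relabelling $\xi_1 \leftrightarrow \xi_5$, $\xi_3 \leftrightarrow \xi_4$ (which interchanges the $R$- and $L$-cycles while fixing $\xi_2$), it suffices to establish switching along the $R$-cycle. The only node of the $R$-cycle admitting a non-trivial choice of incoming or outgoing connection within the network is the branching node $\xi_2$: incoming from $\xi_1$ (on $R$) or from $\xi_5$ (coming off $L$), and outgoing to $\xi_3$ (staying on $R$) or to $\xi_4$ (leaving to $L$). Reading \emph{switching along the cycle} as the natural extension of switching along a connection, the task is to show that, for every such combination of incoming and outgoing choices at $\xi_2$ joined by a full traversal of the $R$-cycle, there is a positive-measure set of trajectories shadowing the corresponding path.

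The plan is to build on the cusp construction from the proof of Lemma \ref{switch-house}. First, in each of the three-dimensional cross sections $H_2^{\inn,1}$ (coordinates $x_3,x_4,x_5$) and $H_2^{\inn,5}$ (coordinates $x_3,x_4,x_1$), the local map at $\xi_2$ splits the section into two cusp-shaped regions $E_{23}$ and $E_{24}$, bounded by the surface $x_4=Cx_3^{e_{24}/e_{23}}$ in the $x_3x_4$-plane and extending trivially in the third coordinate. Both regions have positive measure in any neighbourhood of the origin, so at a single visit to $\xi_2$ every outgoing choice is available from every incoming direction.

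Next, I would verify that a full turn of the $R$-cycle, described by the return map $h_R\colon H_1^{\out,2}\to H_1^{\out,2}$, preserves this straddling. Using the explicit local maps listed in Appendix A, the image $h_R\bigl(H_1^{\out,2}\bigr)$ is a cusp-shaped subset in the $(x_3,x_4,x_5)$-coordinates on $H_2^{\inn,1}$, with tangency direction governed by the $R$-cycle eigenvalues, and Lemma \ref{cusps} then supplies the required positive-measure intersections with both $E_{23}$ and $E_{24}$. Applying the analogous argument at the incoming side ($\psi_{12}=\mathrm{id}$ identifies $H_1^{\out,2}$ with $H_2^{\inn,1}$, and the image of $\phi_{524}$ plays the same role for trajectories arriving via $\xi_5$) completes all four combinations at $\xi_2$.

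The main obstacle will be to verify that the image of the return map is genuinely straddled by the splitting cusp, rather than contained on one side. This is precisely where the absence of a common connection — hence of Assumption 1 — is decisive: in contrast with Theorem \ref{noswitching}, where the global map along the common connection forces the incoming and outgoing cusps to share a tangency axis and one to swallow the other, in the Bowtie network the two cycles meet only at a node, so the $x_4$-direction at $\xi_2$ is structurally independent of the coordinate planes traversed by $h_R$. Consequently the tangency axes produced by $h_R$ and by the splitting at $\xi_2$ are generically distinct, and Lemma \ref{cusps} yields non-trivial intersections on both sides, giving switching along the $R$-cycle; by the symmetry noted above, switching along the $L$-cycle follows.
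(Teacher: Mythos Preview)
Your proposal has a genuine gap in the central step. The paper's proof does \emph{not} study the image $h_R(H_1^{\out,2})$; it tracks the \emph{dividing surface} in $H_2^{\out,3}$ that separates points coming from $\xi_1$ from those coming from $\xi_5$. That surface is a cusp in the $x_1x_5$-plane (extending trivially in $x_4$), and the explicit computation with $\phi_{312}\circ\phi_{231}$ shows that it is carried to a cusp in the $x_3x_5$-plane of $H_2^{\inn,1}$. Since the outgoing split lives in the $x_3x_4$-plane, the two surfaces are genuinely transverse (they share only the $x_3$-axis), and the House-network argument then gives all four intersections.

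Your object $h_R(D_{h_R})$ is the wrong thing to look at: its boundary comes from the domain constraint $x_4<x_3^{e_{24}/e_{23}}$, so it is itself a cusp in the $x_3x_4$-plane --- the \emph{same} plane as the outgoing split $E_{23}/E_{24}$. Your claim that ``the tangency axes \ldots\ are generically distinct'' is therefore false for these sets, and Lemma~\ref{cusps} gives nothing. Concretely, $h_R(D_{h_R})\subset\{X_4<X_3^{(e_{24}/e_{23}+\tilde\nu)/\tilde\rho}\}$, and when $\tilde\delta>0$ this lies entirely inside $E_{23}$ (cf.\ Lemma~\ref{KS_behaviour}), so your argument cannot produce the ``from $\xi_1$, to $\xi_4$'' path. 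The paper avoids this because the transported incoming-split surface is trivially extended in $x_4$, so $x_4$ remains a free parameter on each side of it. A smaller error: for trajectories arriving via $\xi_5$ you want $g_{LR}=\phi_{312}\circ\phi_{231}\circ\phi_{523}$, not $\phi_{524}$, which heads to $\xi_4$ and never enters the $R$-cycle.
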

\begin{proof}
  As usual the cross section $H_2^{\out,3}$ is split into two sets of points coming from $\xi_1$ and points coming from $\xi_5$ (in the $x_1x_5$-plane). The cross section $H_2^{\inn,1}$ is split into points going to $\xi_3$ and $\xi_4$ (in the $x_3x_4$-plane). We have to check what happens to the sets in $H_2^{\out,3}$ when they move around the cycle until they hit $H_2^{\inn,1}$. This is described by the composition of local maps $\phi_{312} \circ \phi_{231}$.
 
 In the three relevant components we have
 \begin{equation*}
\phi_{231}(x_1,x_4,x_5)=\left(x_1^{\frac{c_{32}}{e_{31}}},x_4x_1^{\frac{c_{34}}{e_{31}}},x_5x_1^{\frac{c_{35}}{e_{31}}}\right) \cong (x_2,x_4,x_5),
\end{equation*}
and because
 \begin{equation*}
  \phi_{231}(x_1,0,x_1^\alpha)=\left(x_1^{\frac{c_{32}}{e_{31}}},0,x_1^{\alpha+\frac{c_{35}}{e_{31}}}\right),
 \end{equation*}
 we see that a cusp in the $x_1x_5$-plane becomes a cusp in the $x_2x_5$-plane under $\phi_{231}$, possibly changing its tangency axis. This means that the cusp in the $x_1x_5$-plane in $H_2^{\out,3}$ gets mapped to a cusp in the $x_3x_5$-plane in $H_2^{\inn,1}$. The intersection with the cusp given in the $x_3x_4$-plane can be studied as in the proof of Lemma \ref{switch-house}, and we have switching along the cycle.
\end{proof}

In order to be able to describe the number of consecutive turns around a cycle that points take, we define 
\begin{itemize}
	\item  the set of points that take at least one turn around the $R$-cycle
	$$
	\tilde{\E}_1 = \{ (1,x_2,x_3,x_4,x_5) \in H_2^{\inn,1}: \; \; (1,x_2,x_3,x_4,x_5) \in D_{h_R} \};
	$$
	We have $D_{h_R} = \left\{ (1,x_2,x_3,x_4,x_5) \in H_1^{\out,2}: \; x_4 < x_3^{\frac{e_{24}}{e_{23}}} \right\}$.
	\item  the set of points that take at least one turn around the $L$-cycle
	$$
	\E_1 = \{ (x_1,x_2,x_3,x_4,1) \in H_2^{\inn,5}: \; \; (x_1,x_2,x_3,x_4,1) \in D_{h_L} \}.
	$$
	Note that $D_{h_L} = \left\{ (x_1,x_2,x_3,x_4,1) \in H_5^{\out,2} : \; x_4 > x_3^{\frac{e_{24}}{e_{23}}} \right\}$.
\end{itemize}
We can extend these definitions (see Appendix A for the maps $h_.^n$) to describe the sets of points that take at least $n \geq 2$ turns around a given cycle as
\begin{eqnarray*}
\tilde{\E}_n & = & \{ (1,x_2,x_3,x_4,x_5) \in H_2^{\inn,1}: \; \; h_R^{n-1}(1,x_2,x_3,x_4,x_5) \in  D_{h_R} \}, \\
\E_n & = & \{ (x_1,x_2,x_3,x_4,1) \in H_2^{\inn,5}: \; \; h_L^{n-1}(x_1,x_2,x_3,x_4,1) \in  D_{h_L} \}.
\end{eqnarray*}
It is clear from the definition that $\E_{n+1} \subseteq \E_n$ for all $n \in \N$. Analogously for $\tilde{\E}_n$. As in \cite{KS} we assume for concreteness that $e_{23}>e_{24}$. Note that this means the $L$-cycle cannot attract a set of initial conditions that is asymptotically of full measure\footnote{Equivalently, the $L$-cycle cannot be \emph{essentially asymptotically stable}, see Melbourne \cite[Definition 1.1]{Melbourne1991}, and Lohse \cite{Lohse2015} for a relation to the geometry of the basin of attraction.}, because $D_{h_L}$ is a small cusp-shaped region.

In the next results we establish when the dynamics near the Bowtie network consists in points taking an infinite number of turns around the cycle near which they start. This is the situation observed for the Kirk and Silber network and the one we are not interested in.

\begin{lemma}\label{KS_behaviour}
If $\delta >0$ then $\E_{1} = \E_2$. 
If $\tilde{\delta} >0$ then $\tilde{\E}_{1} = \tilde{\E}_2$.
\end{lemma}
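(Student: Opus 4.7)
The inclusion $\E_2 \subseteq \E_1$ is immediate from the nested structure $\E_{n+1}\subseteq\E_n$ noted just after the definition, so the real content of the lemma is the reverse inclusion $\E_1 \subseteq \E_2$ when $\delta>0$ (and analogously for $\tilde{\E}_1\subseteq\tilde{\E}_2$ when $\tilde\delta>0$). My plan is to establish that the cusp-shaped region $D_{h_L}$ is forward-invariant under $h_L$ as soon as $\delta>0$; the second half of the statement then follows by swapping the roles of the two cycles, which interchanges $(\rho,\nu,\mu,\delta)\leftrightarrow(\tilde\rho,\tilde\nu,\tilde\mu,\tilde\delta)$.

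To carry this out, I will compute $h_L=\phi_{452}\circ\phi_{245}\circ\phi_{524}$ directly in coordinates using the local power-law maps listed in Appendix A. Each $\phi_{ijk}$ raises coordinates to ratios of eigenvalues, so the composition $h_L$ again has a monomial form and sends $(x_1,x_2,x_3,x_4,1)\in H_5^{\out,2}$ to a point whose third and fourth transverse coordinates are, schematically,
\[
x_3' = x_3^{\,\rho}\,\Phi_3(x_1,x_2), \qquad x_4' = x_4\,x_3^{\,\nu}\,\Phi_4(x_1,x_2),
\]
where the exponents are precisely the quantities $\rho$ and $\nu$ defined in the text, and $\Phi_3,\Phi_4$ are products of $x_1,x_2$ to positive powers (involving $\mu$ and the remaining eigenvalue ratios). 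The main bookkeeping obstacle is to make sure the exponents in $x_3,x_4$ agree with $\rho$ and $\nu$; everything else plays a passive role because $x_1,x_2$ are bounded away from influencing the defining inequality of $D_{h_L}$.

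Once the coordinate expression for $h_L$ is in hand, the condition $h_L(x_1,x_2,x_3,x_4,1)\in D_{h_L}$ reads $x_4'>(x_3')^{e_{24}/e_{23}}$, and taking logarithms reduces it to a linear inequality in $\log x_3$ and $\log x_4$. The identity recalled in the text,
\[
\frac{e_{23}}{e_{24}}(\rho-1)-\nu = -\frac{c_{25}}{e_{24}}\delta,
\]
is the whole point of the parametrisation: it rewrites the difference of the two exponents that govern the cusp condition as a positive multiple of $-\delta$. Consequently, for a point already in $\E_1$ (i.e.\ with $x_4>x_3^{e_{24}/e_{23}}$, so $\log x_4-\tfrac{e_{24}}{e_{23}}\log x_3>0$), the transformed cusp inequality rearranges to
\[
\log x_4 - \frac{e_{24}}{e_{23}}\log x_3 \;+\; \frac{c_{25}}{e_{23}}\,\delta\,\log x_3 \;+\;(\text{terms in }\log x_1,\log x_2) \;>\;0,
\]
modulo the harmless contributions from $\Phi_3,\Phi_4$. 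On a sufficiently small neighbourhood of the network the coordinates satisfy $\log x_i<0$, and the extra term $\tfrac{c_{25}}{e_{23}}\delta\log x_3$ has the favourable sign whenever $\delta>0$, so the inequality is preserved and $h_L(\E_1)\subseteq D_{h_L}$, i.e.\ $\E_1\subseteq\E_2$.

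The genuine difficulty is not conceptual but computational: correctly tracking all five coordinates through the three-fold composition and checking that the exponents collapse into the named parameters. Once this is done, the $\delta>0$ case collapses to a one-line sign check via the quoted identity. The second assertion is handled identically, using the twin relation $\tfrac{e_{24}}{e_{23}}(\tilde\rho-1)-\tilde\nu=-\tfrac{c_{21}}{e_{23}}\tilde\delta$ and the return map $h_R$.
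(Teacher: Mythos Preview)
Your overall strategy matches the paper's: show that $D_{h_L}$ is forward-invariant under $h_L$ by comparing the cusp exponent before and after one iterate and invoking the identity $\frac{e_{23}}{e_{24}}(\rho-1)-\nu=-\frac{c_{25}}{e_{24}}\delta$. The paper does exactly this (for the $R$-cycle, with tildes), in two lines, using the explicit return map from Appendix~A.

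However, your execution contains a genuine error that, as written, breaks the argument. The return map around the $L$-cycle is \emph{not} of the form $(x_3',x_4')=(x_3^{\rho}\Phi_3,\,x_4x_3^{\nu}\Phi_4)$; from Appendix~A one has
\[
h_L(x_1,x_3,x_4)=\bigl(x_1x_4^{\mu},\,x_3x_4^{\nu},\,x_4^{\rho}\bigr),
\]
so the roles of $x_3$ and $x_4$ are reversed relative to what you wrote, and there are no extra factors $\Phi_3,\Phi_4$ in $x_1,x_2$ at all (the radial coordinate has been discarded, and $x_1$ simply does not enter the $x_3',x_4'$ components). This matters for the sign check. With the correct map, the quantity $\log x_4'-\tfrac{e_{24}}{e_{23}}\log x_3'$ equals
\[
\log x_4-\tfrac{e_{24}}{e_{23}}\log x_3 \;+\;\Bigl(\rho-1-\tfrac{e_{24}}{e_{23}}\nu\Bigr)\log x_4
\;=\;\log x_4-\tfrac{e_{24}}{e_{23}}\log x_3 \;-\;\tfrac{c_{25}}{e_{23}}\delta\,\log x_4,
\]
and since $\log x_4<0$ the extra term is \emph{positive} when $\delta>0$, which is what preserves the inequality. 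In your write-up the extra term is $+\tfrac{c_{25}}{e_{23}}\delta\log x_3$, which is \emph{negative} for $\delta>0$ and $\log x_3<0$; from that expression the desired inequality does not follow, and the ``harmless'' $\Phi$-terms cannot rescue it (indeed, products of small coordinates to positive powers would only push the wrong way). So the gap is not conceptual but it is real: once you compute $h_L$ correctly the proof is the one-line sign check you anticipated, but with the swapped form the argument fails.
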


\begin{proof}
We present the proof for the $R$-cycle by establishing when $\tilde{\E}_{1} \subseteq \tilde{\E}_2$. This occurs when
the image by $h_R$ of any point in $\tilde{\E}_{1}$ belongs to $\tilde{\E}_{1}$. Using the maps in Appendix A and selecting only the relevant coordinates we have
$$
h_R(x_3,x_4,x_5) = (x_3^{\tilde{\rho}}, x_4 x_3^{\tilde{\nu}}, x_5 x_3^{\tilde{\mu}})
$$
and $D_{h_R} = \{ (x_3,x_4,x_5) \in H_1^{\out,2} : \; x_4 < x_3^{e_{24}/e_{23}} \}$. Then $h_R(x_3,x_4,x_5) \in \tilde{\E}_{1}$ is equivalent to 
$$
x_4 x_3^{\tilde{\nu}} < x_3^{\tilde{\rho}e_{24}/e_{23}} \Leftrightarrow x_4 < x_3^{\tilde{\rho}e_{24}/e_{23}-\tilde{\nu}}.
$$
Since $(x_3,x_4,x_5) \in \tilde{\E}_{1}$ to begin with, we have
$$
x_3^{\tilde{\rho}e_{24}/e_{23}-\tilde{\nu}} > x_3^{e_{24}/e_{23}} \Leftrightarrow \tilde{\rho}\frac{e_{24}}{e_{23}}-\tilde{\nu} < \frac{e_{24}}{e_{23}} \Leftrightarrow -\frac{c_{21}}{e_{23}}\tilde{\delta} <0  \Leftrightarrow \tilde{\delta} >0.
$$
\end{proof}

When $\delta, \tilde{\delta}<0$, the next proposition rules out the possibility of the trajectory of an initial condition near a given cycle remaining for ever near that same cycle. In other words, all points eventually switch from one cycle to the other. This rules out sequences of visits to the cycles of the form $RRR\hdots$ and $LLL\hdots$ as well. It is then immediate that infinite switching as defined by Aguiar {\em et al.} \cite{ACL2005} does not occur near the Bowtie network.

\begin{proposition}
If $\delta, \tilde{\delta} <0$ then $\tilde{\E}_{n+1} \neq \tilde{\E}_n$ and $\E_{n+1} \neq \E_n$ for all $n \in \N$. Furthermore, for all $x \in \tilde{\E}_1$ there is $n \in \N$ such that $x \notin \tilde{\E}_n$, and analogously for $x \in \E_1$.
\end{proposition}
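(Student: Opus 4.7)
I would tackle both assertions by iterating $h_R$ and $h_L$ explicitly and tracking a single one-parameter family of exponents. A straightforward induction on the explicit form of $h_R$ gives
\begin{equation*}
h_R^n(x_3,x_4,x_5) = \bigl(x_3^{\tilde{\rho}^n},\; x_4 x_3^{\tilde{\nu} S_n},\; x_5 x_3^{\tilde{\mu} S_n}\bigr),\qquad S_n := \sum_{k=0}^{n-1} \tilde{\rho}^k.
\end{equation*}
Combining this with the stated description of $D_{h_R}$, the condition $h_R^{n-1}(x) \in D_{h_R}$ defining $\tilde{\E}_n$ becomes $x_4 < x_3^{A_{n-1}}$, where
\begin{equation*}
A_n := \tilde{\rho}^n\, \frac{e_{24}}{e_{23}} - \tilde{\nu}\, S_n,\qquad A_0 = \frac{e_{24}}{e_{23}}.
\end{equation*}
Thus each $\tilde{\E}_n$ is a cusp-shaped region controlled by the single exponent $A_{n-1}$, and the analogous characterization holds for $\E_n$ with $\tilde{\rho},\tilde{\nu},\tilde{\delta}$ replaced by $\rho,\nu,\delta$.

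For the first claim I would use the identity stated just before Lemma \ref{KS_behaviour} to compute
\begin{equation*}
A_n - A_{n-1} = \tilde{\rho}^{n-1}\left(\frac{e_{24}}{e_{23}}(\tilde{\rho}-1) - \tilde{\nu}\right) = -\frac{c_{21}\,\tilde{\delta}}{e_{23}}\, \tilde{\rho}^{n-1}.
\end{equation*}
Since $c_{21},e_{23},\tilde{\rho}>0$ and $\tilde{\delta}<0$, this quantity is strictly positive, so $(A_n)$ is strictly increasing. Therefore $\tilde{\E}_{n+1} \subsetneq \tilde{\E}_n$, witnessed by points with $x_3$ small and $x_4$ chosen just below $x_3^{A_{n-1}}$. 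The analogous computation using $\delta<0$ yields $\E_{n+1} \subsetneq \E_n$.

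For the second claim I would fix $x=(x_3,x_4,x_5)\in\tilde{\E}_1$ with $x_3,x_4>0$ and split on $\tilde{\rho}$. Summing the increments gives $A_n = \frac{e_{24}}{e_{23}} - \frac{c_{21}\,\tilde{\delta}}{e_{23}}\, S_n$. If $\tilde{\rho}\ge 1$ then $S_n\to\infty$ and hence $A_n\to+\infty$; since $x_3\in(0,1)$, this forces $x_3^{A_n}\to 0<x_4$, so $x\notin\tilde{\E}_{n+1}$ for $n$ large. The main obstacle is the regime $\tilde{\rho}<1$, in which $S_n$ converges and the exponent argument alone does not dislodge individual points from all the nested sets. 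Here I would supplement it by observing that the first coordinate of $h_R^n(x)$ equals $x_3^{\tilde{\rho}^n}\to 1$, so after finitely many iterates $h_R^n(x)$ must exit the local cross section $H_1^{\out,2}$ (chosen small enough that this is bounded away from $1$), at which point the linearized local description of $h_R$ ceases to apply and $h_R^{n-1}(x)\notin D_{h_R}$, giving $x\notin\tilde{\E}_n$. The same reasoning establishes the corresponding statement for $\E_1$ using $\rho$ and $\delta$ in place of $\tilde{\rho}$ and $\tilde{\delta}$.
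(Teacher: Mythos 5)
Your argument follows the same route as the paper's: iterate $h_R$ explicitly, reduce membership in $\tilde{\E}_{n+1}$ to the single exponent inequality $x_4 < x_3^{A_n}$, and use the identity $\frac{e_{24}}{e_{23}}(\tilde{\rho}-1)-\tilde{\nu} = -\frac{c_{21}}{e_{23}}\tilde{\delta}$ to show $A_n - A_{n-1} = -\frac{c_{21}}{e_{23}}\tilde{\delta}\,\tilde{\rho}^{n-1}>0$, which gives the strict nesting $\tilde{\E}_{n+1}\subsetneq\tilde{\E}_n$. Where you genuinely depart from the paper is in the second claim. The paper concludes unboundedness of $(A_n)$ from the observation that the increments $A_n-A_{n-1}$ are themselves increasing; that inference is valid only when $\tilde{\rho}\geq 1$, since for $\tilde{\rho}<1$ the increments form a convergent geometric series and $(A_n)$ remains bounded, so the exponent argument alone cannot eject a fixed point from every $\tilde{\E}_n$. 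You correctly isolate this regime and close it by a different mechanism: for $\tilde{\rho}<1$ the first coordinate $x_3^{\tilde{\rho}^n}\to 1$, so the iterates leave the local cross section and hence $D_{h_R}$ after finitely many steps. The upshot is that your proof covers the full range of parameters compatible with the stated hypothesis $\delta,\tilde{\delta}<0$, whereas the paper's argument is complete only under the additional (standard for attracting cycles, but not stated here) assumption $\rho,\tilde{\rho}>1$; the price is a case split that is vacuous in the situation the authors presumably have in mind. Both parts of your argument are correct, including the tacit use of the boundedness of the cross section to rule out $h_R^{n-1}(x)\in D_{h_R}$ once the first coordinate is no longer small.
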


\begin{proof}
Again we present the proof for the $R$-cycle. We show that $\tilde{\E}_{n+1} \varsubsetneq \tilde{\E}_n$. 

We have 
$$
h_R^n(x_3,x_4,x_5) = (x_3^{\tilde{\rho}^n}, x_4 x_3^{\tilde{\nu}\sum_{i=0}^{n-1}\tilde{\rho}^i}, x_5 x_3^{\tilde{\mu}\sum_{i=0}^{n-1}\tilde{\rho}^i}).
$$
According to the definition 
\begin{eqnarray*}
(x_3,x_4,x_5) \in \tilde{\E}_{n+1} & \Leftrightarrow & h_R^n(x_3,x_4,x_5) \in \tilde{\E}_1\\
& \Leftrightarrow & x_4 x_3^{\tilde{\nu}\sum_{i=0}^{n-1}\tilde{\rho}^i} < x_3^{\frac{e_{24}}{e_{23}}\tilde{\rho}^n} \\
& \Leftrightarrow & x_4 < x_3^{\frac{e_{24}}{e_{23}}\tilde{\rho}^n-\tilde{\nu}\sum_{i=0}^{n-1}\tilde{\rho}^i}
\end{eqnarray*}
Since $\tilde{\delta} <0$, the sequence $\left( \frac{e_{24}}{e_{23}}\tilde{\rho}^n-\tilde{\nu}\sum_{i=0}^{n-1}\tilde{\rho}^i \right)_{n \geq 1}$ is increasing. In fact,
\begin{eqnarray*}
 \frac{e_{24}}{e_{23}}\tilde{\rho}^{n+1}-\tilde{\nu}\sum_{i=0}^{n}\tilde{\rho}^i - \frac{e_{24}}{e_{23}}\tilde{\rho}^n+\tilde{\nu}\sum_{i=0}^{n-1}\tilde{\rho}^i 
& = & \frac{e_{24}}{e_{23}}\tilde{\rho}^n (\tilde{\rho}-1) - \tilde{\nu}\tilde{\rho}^n \\
& = & -\frac{c_{21}}{e_{23}}\tilde{\delta} \tilde{\rho}^n > 0 \\
& \Leftrightarrow & \tilde{\delta} < 0.
\end{eqnarray*}
From this it also follows that the difference between two consecutive terms is increasing and therefore the sequence is unbounded, which proves the second part of the claim. Hence, there exists $n \in \N$, such that $(x_3,x_4,x_5) \notin \tilde{\E}_n$, meaning that such a point takes at most $n-1$ turns around the $R$-cycle.
\end{proof}

This result is in accordance with the instability conditions in \cite[Lemma 4.4 (c)]{Lohse2015} for $B_3^-$-cycles in $\R^4$. It leads to the next theorem and from now on we assume its hypotheses are satisfied.

\begin{theorem}
If $\delta, \tilde{\delta} <0$ then neither cycle is e.a.s. In particular, there is no initial condition whose trajectory is asymptotic to just one cycle.
\end{theorem}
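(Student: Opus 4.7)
The proof proposal rests on reading the statement as an immediate corollary of the preceding proposition, together with the observation that being asymptotic to a single cycle forces a trajectory to make infinitely many consecutive turns around it.

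The first step is to translate the condition that a trajectory is asymptotic to the $R$-cycle into the language of the sets $\tilde{\mathcal{E}}_n$. If $\omega(x)$ is contained in the $R$-cycle, then the trajectory of $x$ must accumulate on each of $\xi_1,\xi_2,\xi_3$, which means it must enter each of the cross-sections $H_j^{\mathrm{in},i}$ along the $R$-cycle infinitely often, i.e.\ it must take infinitely many consecutive turns around the $R$-cycle. By definition of $\tilde{\mathcal{E}}_n$, this forces the point (after one application of the relevant map into $H_2^{\mathrm{in},1}$) to lie in $\bigcap_{n\in\mathbb{N}} \tilde{\mathcal{E}}_n$. Points that never enter $\tilde{\mathcal{E}}_1$ at all do not even make a full turn and so trivially cannot be asymptotic to the cycle.

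The second step is to invoke the preceding proposition: under $\tilde{\delta}<0$, every $x\in\tilde{\mathcal{E}}_1$ eventually fails to belong to some $\tilde{\mathcal{E}}_n$, so $\bigcap_n \tilde{\mathcal{E}}_n = \emptyset$. Combining this with the first step yields that no trajectory is asymptotic to the $R$-cycle alone. The identical argument with $\mathcal{E}_n$ and $\delta<0$ disposes of the $L$-cycle. This gives the second sentence of the theorem directly.

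The final step is to deduce the failure of essential asymptotic stability. Recall (as cited in the footnote before Lemma \ref{KS_behaviour}) that e.a.s.\ of a cycle requires a positive-density subset of a neighbourhood to consist of initial conditions whose $\omega$-limit lies in the cycle. Since we have just shown this set is in fact empty for either cycle, both cycles fail to be e.a.s.\ in the strongest possible way: their local basins of attraction are empty. The only step requiring any care is the rigorous identification of ``asymptotic to the cycle'' with ``infinitely many consecutive turns'' — the potential subtlety being that a trajectory could conceivably switch between cycles infinitely often and still have $\omega$-limit inside one of them, but this is ruled out because accumulating at $\xi_3$ (resp.\ $\xi_4$) means following the connection $[\xi_3\to\xi_1]$ (resp.\ $[\xi_4\to\xi_5]$) past a neighbourhood of $\xi_2$ infinitely often, which is precisely what the turn-counting in $\tilde{\mathcal{E}}_n$ (resp.\ $\mathcal{E}_n$) tracks.
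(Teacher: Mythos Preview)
Your proposal is correct and matches the paper's approach exactly: the paper gives no separate proof of this theorem, merely stating that the preceding proposition ``leads to the next theorem,'' and your argument spells out precisely that implication via $\bigcap_n \tilde{\mathcal E}_n = \emptyset$ and the definition of e.a.s. The only minor sharpening worth making in your final paragraph is that the exclusion of infinite switching with $\omega$-limit in a single cycle is most cleanly seen by noting that $\xi_4,\xi_5$ do not lie on the $R$-cycle, so a trajectory asymptotic to the $R$-cycle must eventually stop visiting them altogether, after which all turns are consecutive $R$-turns.
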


Notice that, when $\delta, \tilde{\delta} <0$, given any natural number we can find a point whose trajectory turns exactly that number of times around either of the cycles.

Having established that every point near a cycle will eventually be taken close to the other cycle, we proceed to give a better understanding of how this transition occurs. We start with a point near the $R$-cycle whose trajectory visits the $L$-cycle, that is, a point $(x_3,x_4,x_5) \in H_1^{\out,2} \backslash \tilde{\E}_1$. The transition in the opposite direction is in every way analogous. The map describing this transition is $g_{RL}: H_1^{\out,2} \rightarrow H_5^{\out,2}$ given by
$$
g_{RL}(x_3,x_4,x_5) = \left(x_4^{\mu}x_5^{\alpha}, x_3 x_4^{\nu} x_5^{\beta}, x_4^{\rho} x_5^{\frac{e_{24}}{c_{25}}\rho}\right)
$$
with 
$$
\alpha = \frac{c_{41}}{e_{45}}+\frac{c_{42}c_{51}}{e_{45}e_{52}}>0 \; \; \mbox{ and  } \; \; \; \beta = \frac{c_{43}}{e_{45}}+\frac{c_{42}c_{53}}{e_{45}e_{52}} = \delta+\frac{e_{23}}{c_{25}}\rho >0.
$$
The next result establishes that there exist both points $(x_3,x_4,x_5) \in H_1^{\out,2} \backslash \tilde{\E}_1$ that visit the $L$-cycle and immediately return to a neighbourhood of the $R$-cycle and points $(x_3,x_4,x_5) \in H_1^{\out,2} \backslash \tilde{\E}_1$ that visit the $L$-cycle and remain in its neighbourhood for at least one turn. These correspond, respectively, to dynamics containing the sequence $RLR$ and the sequence $RLL$.

\begin{proposition}\label{RLR_LRL-transition}
A point can be chosen in $H_1^{\out,2} \backslash \tilde{\E}_1$ so that either $g_{RL}(x_3,x_4,x_5) \in H_5^{\out,2} \backslash \E_1$ or $g_{RL}(x_3,x_4,x_5) \in \E_1$.
\end{proposition}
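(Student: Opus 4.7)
The plan is to convert the two alternative conclusions into explicit inequalities in $(x_3,x_4,x_5)$ and to show that, inside the cusp $\{x_4>x_3^{e_{24}/e_{23}}\}$ defining $H_1^{\out,2}\setminus\tilde{\E}_1$, each inequality is realised by a non-empty open set of points.

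First, I would identify which components of the image $g_{RL}(x_3,x_4,x_5)$ encode the directions used in the defining inequality of $\E_1$. The cross-section $H_5^{\out,2}\cong H_2^{\inn,5}$ is parametrised by $(x_1,x_3,x_4)$ (the radial direction $x_5$ being normalised, the expanding direction $x_2$ fixed at the cross-section level), so the second and third components of $g_{RL}$ play the roles of $x_3$ and $x_4$ in $\E_1=\{x_4>x_3^{e_{24}/e_{23}}\}$. Substitution gives
$$x_4^\rho x_5^{e_{24}\rho/c_{25}}>\bigl(x_3 x_4^\nu x_5^\beta\bigr)^{e_{24}/e_{23}},$$
and applying the two identities
$$\frac{e_{23}}{e_{24}}(\rho-1)-\nu=-\frac{c_{25}}{e_{24}}\delta,\qquad \beta=\delta+\frac{e_{23}}{c_{25}}\rho$$
recorded in the text, this collapses to
$$x_4^{A}\,x_5^{B}>x_3^{e_{24}/e_{23}},\qquad A:=1-\frac{c_{25}\delta}{e_{23}},\quad B:=-\frac{e_{24}\delta}{e_{23}}.$$
By the standing hypothesis $\delta<0$ we have $A>1$ and $B>0$.

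Given this clean form, both alternatives are easy to realise. For $g_{RL}(x_3,x_4,x_5)\in\E_1$, fix small positive $x_4,x_5$ and let $x_3\to 0^+$: both the domain constraint $x_4>x_3^{e_{24}/e_{23}}$ and the $\E_1$ condition $x_4^A x_5^B>x_3^{e_{24}/e_{23}}$ hold for all sufficiently small $x_3$. For $g_{RL}(x_3,x_4,x_5)\in H_5^{\out,2}\setminus\E_1$, fix small positive $x_3,x_5$ and pick $x_4$ in the interval
$$\bigl(x_3^{e_{24}/e_{23}},\;\bigl(x_3^{e_{24}/e_{23}}x_5^{-B}\bigr)^{1/A}\bigr),$$
which is non-empty because raising the inequality between the endpoints to the $A$-th power reduces it to $x_5^{-B}>x_3^{(A-1)e_{24}/e_{23}}$, and with $0<x_3,x_5<1$, $A>1$, $B>0$ the left side exceeds $1$ while the right side is less than $1$. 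Any such $x_4$ satisfies the domain condition but violates the $\E_1$ inequality.

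The main obstacle is essentially bookkeeping rather than analysis: correctly matching the image slots of $g_{RL}$ to the coordinate directions in $H_5^{\out,2}$ that appear in the definition of $\E_1$, and then carrying out the algebraic simplification that absorbs $\rho,\nu,\beta$ into the single parameter $\delta$ via the two identities. Once the $\E_1$ condition is rewritten as $x_4^A x_5^B\gtrless x_3^{e_{24}/e_{23}}$ with $A>1$ and $B>0$, the existence of points of either type inside the admissible cusp follows by inspection.
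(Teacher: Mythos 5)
Your proof is correct and follows essentially the same route as the paper's: both reduce the condition $g_{RL}(x_3,x_4,x_5)\in\E_1$ to an explicit inequality whose exponents collapse, via the two stated identities, to quantities controlled by the sign of $\delta$, and then exhibit points satisfying either alternative by inspection. The only cosmetic difference is in the $RLR$ case, where the paper fixes $x_3,x_4$ and takes $x_5\ll 1$ while you fix $x_3,x_5$ and choose $x_4$ in a non-empty interval; both choices are valid.
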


\begin{proof}
Since $(x_3,x_4,x_5) \in H_1^{\out,2} \backslash \tilde{\E}_1$ we know that $x_4 > x_3^{\frac{e_{24}}{e_{23}}}$. We have $g_{RL}(x_3,x_4,x_5) \in H_5^{\out,2} \backslash \E_1$ if and only if\footnote{Recall that, using only relevant coordinates, we have $g_{RL}(x_3,x_4,x_5) = (X_1,X_3,X_4)$.}
\begin{equation}\label{RLR-transition}
x_4^{\rho} x_5^{\frac{e_{24}}{c_{25}}\rho} < \left( x_3 x_4^{\nu} x_5^{\beta} \right)^{\frac{e_{24}}{e_{23}}}  \Leftrightarrow  x_4^{\frac{e_{23}}{e_{24}}\rho-\nu} x_5^{\frac{e_{23}}{c_{25}}\rho-\beta} < x_3.
\end{equation}
Note that 
\begin{itemize}
	\item  $\frac{e_{23}}{e_{24}}\rho-\nu = \frac{e_{23}}{e_{24}} - \frac{c_{25}}{e_{24}}\delta > \frac{e_{23}}{e_{24}} >1$ and
	\item  $\frac{e_{23}}{c_{25}}\rho-\beta = -\delta >0$
\end{itemize}
so that if we choose $x_5 \ll 1$ the inequality holds. We have thus found a point whose trajectory starts near the $R$-cycle, visits the $L$-cycle and immediately visits the $R$-cycle again.

For  finding a point corresponding to a sequence $RLL$, the opposite inequality in (\ref{RLR-transition}) needs to be satisfied. This can be achieved by choosing $x_3 \ll 1$.
\end{proof}

We now consider points $(x_3,x_4,x_5) \in H_1^{\out,2} \backslash \tilde{\E}_1$ that visit the $L$-cycle and remain in its neighbourhood for at least one turn. We show that given a natural number $n$, it is possible to find a point in $H_1^{\out,2} \backslash \tilde{\E}_1$ that takes at least $n$ turns around the $L$-cycle before visiting the $R$-cycle again. The dynamics exhibited by these points is described by transitions containing a sequence of the form $RLL\hdots LR$ where the $L$'s can appear in any finite number.

\begin{proposition}\label{finiteL-transition}
Points $(x_3,x_4,x_5) \in H_1^{\out,2} \backslash \tilde{\E}_1$ can take any finite number of turns around the $L$-cycle before returning to a neighbourhood of the $R$-cycle.
\end{proposition}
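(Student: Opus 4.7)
The plan is to extend the final construction in Proposition~\ref{RLR_LRL-transition}, where the choice $x_3 \ll 1$ was used to land $g_{RL}(x_3,x_4,x_5)$ in $\E_1$, so as to land the image in any prescribed $\E_n$. The tool will be the $L$-cycle analogue of the explicit description of $\tilde{\E}_{n+1}$ derived in the proof of the preceding proposition.

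First I would compute $h_L^n$ in the three relevant coordinates on $H_5^{\out,2}$. By the structural symmetry with the $R$-cycle, the return map reads $h_L(x_1,x_3,x_4) = (x_1 x_4^{\mu}, x_3 x_4^{\nu}, x_4^{\rho})$, and an easy induction gives $h_L^n(x_1,x_3,x_4) = (x_1 x_4^{\mu\Sigma_n},\, x_3 x_4^{\nu\Sigma_n},\, x_4^{\rho^n})$ with $\Sigma_n := \sum_{i=0}^{n-1}\rho^i$. Unfolding the condition $h_L^n(x_1,x_3,x_4) \in D_{h_L}$ yields
\begin{equation*}
(x_1,x_3,x_4) \in \E_{n+1} \iff x_3 < x_4^{G(n)}, \qquad G(n) := \frac{e_{23}}{e_{24}}\rho^n - \nu\, \Sigma_n.
\end{equation*}
The relation $\frac{e_{23}}{e_{24}}(\rho-1) - \nu = -\frac{c_{25}}{e_{24}}\delta$ recalled in the paper, combined with $\delta<0$, gives $G(n+1) - G(n) = -\rho^n (c_{25}/e_{24})\,\delta > 0$, so $G$ is strictly increasing in $n$; this mirrors the monotonicity step used in the $\tilde{\E}_{n+1}$ argument.

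Next I would substitute the coordinates of $g_{RL}(x_3,x_4,x_5)$, namely $X_3 = x_3 x_4^{\nu} x_5^{\beta}$ and $X_4 = x_4^{\rho} x_5^{(e_{24}/c_{25})\rho}$, into the inequality above and rearrange exactly as in (\ref{RLR-transition}). This transforms the condition $g_{RL}(x_3,x_4,x_5) \in \E_{n+1}$ into
\begin{equation*}
x_3 < x_4^{\rho G(n) - \nu}\, x_5^{(e_{24}/c_{25})\rho\,G(n) - \beta},
\end{equation*}
a strict upper bound by a strictly positive quantity depending only on $x_4, x_5$ and $n$. Combined with the membership requirement $x_3 < x_4^{e_{23}/e_{24}}$ that keeps $(x_3,x_4,x_5)$ in $H_1^{\out,2}\setminus\tilde{\E}_1$, both constraints are upper bounds on $x_3$. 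For any fixed $n$ and any $x_4, x_5 \in (0,1)$, any sufficiently small $x_3>0$ satisfies both simultaneously, producing a point with itinerary beginning $R\underbrace{LL\cdots L}_{n}R$.

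The main obstacle I anticipate is the first step, namely the bookkeeping for $h_L^n$: correctly identifying $x_4$ as the primary iterated coordinate on $H_5^{\out,2}$, running the induction to obtain $\Sigma_n$ and $G(n)$, and invoking the cancellation identity from the paper to ensure monotonicity of $G(n)$. Once this $L$-side analysis is in place, the rest is the same ``shrink $x_3$'' mechanism already used in Proposition~\ref{RLR_LRL-transition} and needs no further analytic input.
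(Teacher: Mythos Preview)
Your proposal is correct and follows essentially the same route as the paper: characterize $\E_{n+1}$ via $h_L^n$ as $X_3 < X_4^{G(n)}$, substitute the components of $g_{RL}$, and observe that the resulting constraint is an upper bound on $x_3$ that can be met (together with $x_3 < x_4^{e_{23}/e_{24}}$) by taking $x_3$ small. The paper additionally verifies that the exponents $\rho G(n)-\nu$ and $\tfrac{e_{24}}{c_{25}}\rho G(n)-\beta$ are positive using the monotonicity of $G$, but as you note this is not strictly needed for the existence argument since the right-hand side is positive for any fixed $x_4,x_5\in(0,1)$.
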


\begin{proof}
We show that it is possible to choose coordinates $(x_3,x_4,x_5)$ such that $g_{RL}(x_3,x_4,x_5) \in \E_{n+1}$. In the relevant coordinates, recall that  $g_{RL}(x_3,x_4,x_5) = (X_1,X_3,X_4)$. Points in $\E_{n+1}$ satisfy 
$$
X_3 < X_4^{\frac{e_{23}}{e_{24}}\rho^n-\nu \sum_{i=0}^{n-1}\rho^i}.
$$
Hence,
\begin{eqnarray*}
g_{RL}(x_3,x_4,x_5) \in \E_{n+1} & \Leftrightarrow & x_3 x_4^{\nu} x_5^{\beta} < \left( x_4^{\rho} x_5^{\frac{e_{24}}{c_{25}}\rho} \right)^{\frac{e_{23}}{e_{24}}\rho^n-\nu \sum_{i=0}^{n-1}\rho^i} \\
& \Leftrightarrow & x_3 < x_4^{\frac{e_{23}}{e_{24}}\rho^{n+1}-\nu \sum_{i=0}^{n}\rho^i -\nu} x_5^{\frac{e_{24}}{c_{25}}\left(\frac{e_{23}}{e_{24}}\rho^{n+1}-\nu \sum_{i=0}^{n}\rho^i\right) -\beta}
\end{eqnarray*}
Since the sequence $\left( \frac{e_{24}}{e_{23}}\rho^n-\nu\sum_{i=0}^{n-1}\rho^i \right)_{n \geq 0}$ is increasing we have
$$
\frac{e_{23}}{e_{24}}\rho^{n+1}-\nu \sum_{i=0}^{n}\rho^i -\nu > \frac{e_{23}}{e_{24}}\rho -\nu >0
$$
and 
$$
\frac{e_{24}}{c_{25}}\left(\frac{e_{23}}{e_{24}}\rho^{n+1}-\nu \sum_{i=0}^{n}\rho^i\right) -\beta > \frac{e_{23}}{c_{25}}\rho -\beta = -\delta >0.
$$
Hence, by choosing $x_3 \ll 1$ we finish the proof.
\end{proof}

Note that the bigger the $n$, the smaller $x_3$ has to be. So, points whose dynamics are represented by long sequences of $L$'s are found very close to the $x_4$-axis. The choice of $x_3$ in this proof does not exclude the choice made in the proof of Proposition \ref{RLR_LRL-transition}.

\section{Concluding remarks}\label{conclusion}
This work deals with switching near simple networks in $\R^n$. We establish as a general result what several examples in the literature have been pointing at: that switching more complex than at a node is severely constrained unless the linearization of the vector field at a node has complex eigenvalues. We find a sufficient condition that prevents even switching along a connection, a strong constraint on the dynamics near the network. We consider an example where the sufficient condition is relaxed and study some of the dynamical features that arise from a new type of switching. Although infinite switching does not occur, several interesting possibilities are present in the dynamics near this network. An exhaustive description of the dynamics of this example is out of the scope of this work. We hope that our description serves as a useful illustration for further research in this area.

\paragraph{Acknowledgements:}
The authors were, in part, supported by CMUP (UID/MAT/00144/2013), funded by the Portuguese Government through the Funda\c{c}\~ao para a Ci\^encia e a Tecnologia (FCT) with national (MEC) and European structural funds through the programs FEDER, under the partnership agreement PT2020.
The second author further acknowledges support through FCT grant Incentivo/MAT/UI0144/2014.

\subsection*{A \hspace{.2cm} Local maps for the Bowtie network}

In order to construct the return map around the $R$-cycle 
$h_R: \; H_1^{\out,2} \rightarrow H_1^{\out,2}$ given by $h_R=\phi_{312} \circ \phi_{231} \circ \phi_{123}$, we provide the local maps. Analogously, for the return map around the $L$-cycle, $h_L: \; H_5^{\out,2} \rightarrow H_5^{\out,2}$, given by $h_L= \phi_{452} \circ \phi_{245} \circ \phi_{524}$.

Note that, because the global maps are the identity, we can identify $H_i^{\out,j}$ with $H_{j}^{\inn,i}$.
\bigbreak

\paragraph{For the $R$-cycle:}
\begin{eqnarray*}
\phi_{123}: H_{2}^{\inn,1} \to H_{2}^{\out,3}, \quad \phi_{123}(1,x_2,x_3,x_4,x_5) & = & \Big(x_3^{\frac{c_{21}}{e_{23}}}, x_2x_3^{\frac{r_2}{e_{23}}},1, x_4 x_3^{-\frac{e_{24}}{e_{23}}} , x_5 x_3^{\frac{c_{25}}{e_{23}}} \Big) \\
\phi_{231}: H_{3}^{\inn,2} \to H_{3}^{\out,1}, \quad \phi_{231}(x_1,1,x_3,x_4,x_5) & = & \Big(1, x_1^{\frac{c_{32}}{e_{31}}}, x_3x_1^{\frac{r_3}{e_{31}}}, x_4 x_1^{\frac{c_{34}}{e_{31}}}, x_5 x_1^{\frac{c_{35}}{e_{31}}} \Big) \\
\phi_{312}: H_{1}^{\inn,3} \to H_{1}^{\out,2}, \quad \phi_{312}(x_1,x_2,1,x_4,x_5) & = & \Big(x_1 x_2^{\frac{r_1}{e_{12}}}, 1, x_2^{\frac{c_{13}}{e_{12}}}, x_4x_2^{\frac{c_{14}}{e_{12}}}, x_5 x_2^{\frac{c_{15}}{e_{12}}} \Big)
\end{eqnarray*}

\paragraph{For the $L$-cycle:}
\begin{eqnarray*}
\phi_{524}: H_{2}^{\inn,5} \to H_{2}^{\out,4}, \quad \phi_{524}(x_1,x_2,x_3,x_4,1) & = & \Big(x_1 x_4^{\frac{c_{21}}{e_{24}}}, x_2x_4^{\frac{r_2}{e_{24}}}, x_3 x_4^{-\frac{e_{23}}{e_{24}}},1, x_4^{\frac{c_{25}}{e_{24}}}  \Big) \\
\phi_{245}: H_{4}^{\inn,2} \to H_{4}^{\out,5}, \quad \phi_{245}(x_1,1,x_3,x_4,x_5) & = & \Big(x_1x_5^{\frac{c_{41}}{e_{45}}}, x_5^{\frac{c_{42}}{e_{45}}}, x_3x_5^{\frac{c_{43}}{e_{45}}}, x_4 x_5^{\frac{r_4}{e_{45}}}, 1  \Big) \\
\phi_{452}: H_{5}^{\inn,4} \to H_{5}^{\out,2}, \quad \phi_{452}(x_1,x_2,x_3,1,x_5) & = & \Big(x_1 x_2^{\frac{c_{51}}{e_{52}}}, 1, x_3x_2^{\frac{c_{53}}{e_{52}}}, x_2^{\frac{c_{54}}{e_{52}}}, x_5 x_2^{\frac{r_5}{e_{52}}} \Big) 
\end{eqnarray*}

\paragraph{For the transition between cycles:}
\begin{eqnarray*}
\phi_{124}: H_{2}^{\inn,1} \to H_{2}^{\out,4}, \quad \phi_{124}(1,x_2,x_3,x_4,x_5) & = & \Big(x_4^{\frac{c_{21}}{e_{24}}}, x_2x_4^{\frac{r_2}{e_{24}}}, x_3 x_4^{-\frac{e_{23}}{e_{24}}},1, x_5 x_4^{\frac{c_{25}}{e_{24}}}  \Big) \\
\phi_{523}: H_{2}^{\inn,5} \to H_{2}^{\out,3}, \quad \phi_{523}(x_1, x_2, x_3, x_4, 1) & = & \Big( x_1x_3^{\frac{c_{21}}{e_{23}}}, x_2 x_3^{\frac{r_2}{e_{23}}}, 1, x_4 x_3^{-\frac{e_{24}}{e_{23}}}, x_3^{\frac{c_{25}}{e_{23}}} \Big)
\end{eqnarray*}

\paragraph{The return maps:} We select only the relevant components for the return maps, that is, we discard the radial and outgoing components. The return maps are:
\begin{eqnarray*}
h_R(x_3,x_4,x_5) & = & (x_3^{\tilde{\rho}}, x_4 x_3^{\tilde{\nu}}, x_5 x_3^{\tilde{\mu}}) \\
h_L(x_1,x_3,x_4) & = & (x_1 x_4^{\mu}, x_3 x_4^{\nu}, x_4^{\rho})
\end{eqnarray*}
Iteration of the return maps leads to:
\begin{eqnarray*}
h_R^n(x_3,x_4,x_5) & = & \left(x_3^{\tilde{\rho}^n}, x_4 x_3^{\tilde{\nu} \sum_{i=0}^{n-1}\tilde{\rho}^i}, x_5 x_3^{\tilde{\mu} \sum_{i=0}^{n-1}\tilde{\rho}^i}\right) \\
h_L^n(x_1,x_3,x_4) & = & \left(x_1 x_4^{\mu \sum_{i=0}^{n-1}\rho^i}, x_3 x_4^{\nu \sum_{i=0}^{n-1}\rho^i}, x_4^{\rho^n}\right)
\end{eqnarray*}
Note that, using all coordinates, $D_{h_R}=D_{\phi_{123}} \subset H_1^{\out,2}$ and $D_{h_L}=D_{\phi_{524}} \subset H_5^{\out,2}$.

\end{document}